\journal{Journal of Combinatorial Theory - Series B}
\newtheorem{thm}{Theorem}[section]
\newtheorem{lem}[thm]{Lemma}
\newtheorem{cor}[thm]{Corollary}
\newtheorem{exa}[thm]{Example}
\newtheorem{pro}[thm]{Proposition}
\newtheorem{conj}[thm]{Conjecture}
\newtheorem{cas}{Case}
\newtheorem{sub}{Case}[cas]
\newlength{\blank}
\newtheorem{sta}{\hspace{-\blank}}[thm] % statement inside a theorem
\newcommand{\Contra}[3]{#1 / (#2 \rightarrow #3)}
\newcommand{\rmenum}
{
\renewcommand{\theenumi}{{\rm(\roman{enumi})}}
\renewcommand{\labelenumi}{\theenumi}
}
\newenvironment{proof}{\noindent\underline{Proof}:}{\hfill\qed\par\medskip}
\begin{document}
  \title{Laminar Tight Cuts in Matching Covered Graphs\tnoteref{mytitlenote}}
  \tnotetext[mytitlenote]{Research Supported by NSF grant DMS-1855716, National
    Natural Science Foundation    in China (Grants No. 11671186 and 11471273),
    NSF of Shandong Province and Brazilian \textsc{cnp}q}

  %% Group authors per affiliation:
  %% \author{Elsevier\fnref{myfootnote}}
  %% \address{Radarweg 29, Amsterdam}
  %% \fntext[myfootnote]{Since 1880.}

  %% or include affiliations in footnotes:
  \author[chenaddress]{Guantao Chen}
  \ead{gchen@gsu.edu}
  \address[chenaddress]{Department of Mathematics and Statistics, Georgia State
    University, Atlanta, GA 30303}

  \author[fengaddress]{Xing Feng\corref{corresponding}}
  \ead{fengxing\_fm@163.com}
  \address[fengaddress]{Faculty of Science, Jiangxi University of Science and Technology, Ganzhou 341000, China}

  \author[luaddress]{Fuliang Lu}
  \ead{flianglu@163.com}
  \address[luaddress]{School of Mathematics and Statistics,
  Minnan Normal University, Zhangzhou 363000,  China}

  \author[lucchesiaddress]{Cláudio L.~Lucchesi}
  \ead{lucchesi@ic.unicamp.br}
  \address[lucchesiaddress]{Institute of Computing, \textsc{unicamp}, Campinas, Brasil}

  \author[zhangaddress]{Lianzhu Zhang}
  \ead{zhanglz@xmu.edu.cn}
  \address[zhangaddress]{School of Mathematical Science, Xiamen University, Xiamen 361005, China}

  \cortext[corresponding]{Corresponding author}

  \begin{frontmatter}

    \begin{abstract}
      An edge cut $C$ of a graph  $G$ is {\it tight} if $|C \cap M|=1$
      for  every  perfect  matching   $M$  of  $G$.~Barrier  cuts  and
      2-separation  cuts  are called  {\it  ELP-cuts},  which are  two
      important   types   of   tight    cuts   in   matching   covered
      graphs.~Edmonds,  Lov\'asz  and  Pulleyblank proved  that  if  a
      matching covered graph has a  nontrivial tight cut, then it also
      has a  nontrivial ELP-cut.~Carvalho, Lucchesi, and  Murty made a
      stronger conjecture:  given any  nontrivial tight  cut $C$  in a
      matching covered  graph $G$,  there exists a  nontrivial ELP-cut
      $D$ in $G$  which does not cross $C$.~We  confirm the conjecture
      in this paper.
    \end{abstract}

    \begin{keyword}
      tight  cut  \sep ELP-cut  \sep  perfect  matching \sep  matching
      covered graph \MSC[2010] 05C70
    \end{keyword}

  \end{frontmatter}

  \section{Introduction}

  All  graphs considered  in this  paper  are finite  and may  contain
  multiple edges, but no loops.  We will generally follow the notation
  and terminology used by Bondy and Murty in~\cite{bm08}.

  Let $G$ be  a graph with vertex  set $V$ and edge set  $E$.  For any
  $X\subseteq V$, let  $N_G(X)$ be the set of vertices  of $V-X$ which
  are adjacent to vertices of $X$  and let $\overline{X} = V-X$ be the
  complement of $X$ in $V$.  The set of {\it coboundary edges} of $X$,
  denoted by $\partial(X)$, is the set  of edges with exact one end in
  $X$ and one end in $\overline{X}$.  A {\it cut} is a coboundary edge
  set.   We   call  $X$  and   $\overline{X}$  the  {\it   shores}  of
  $\partial(X)$.  A cut  $\partial(X)$ is {\it trivial}  if $|X|=1$ or
  $|\overline{X}| =1$.

  Let   $C:=\partial(X)$   be   a   cut  of   $G$.   We   denote   by
  $\Contra{G}{X}{x}$ the graph obtained from $G$ by contracting $X$ to
  a single vertex $x$ (and  removing any resulting loops).  The graphs
  $\Contra{G}{X}{x}$ and  $\Contra{G}{\overline{X}}{\overline{x}}$ are
  the two \emph{$C$-contractions of $G$}.

  Let $C:=\partial(X)$  and $D:=\partial(Y)$ be  two cuts of  $G$.  We
  say  that $C$  and  $D$ {\it  cross}  if all  the  four sets  $X\cap
  Y,\overline{X}\cap         Y,         X\cap\overline{Y}$         and
  $\overline{X}\cap\overline{Y}$ are  nonempty, and are  {\it laminar}
  otherwise.  So, two cuts $C$ and $D$  are laminar if and only if one
  of the two shores of $C$ is a  subset of one of the shores of $D$. A
  collection of cuts is \emph{laminar} if no two of its cuts cross.

  A graph is called {\it matching  covered} if it is connected, has at
  least one  edge and each of  its edges is contained  in some perfect
  matching. Suppose that our graph $G$ is matching covered.  A cut $C$
  of $G$  is {\it tight} if  $|C\cap M|=1$ for every  perfect matching
  $M$ of  $G$. Clearly every  trivial cut is a  tight cut.  We  call a
  matching covered graph which is free of nontrivial tight cuts a {\it
    brace} if it is bipartite, and a {\it brick} otherwise.

  Let  $S$ be  a set  of vertices  of $G$.   A component  of $G-S$  is
  \emph{odd}  if  it  consists  of  an  odd  number  of  vertices,  is
  \emph{even} if  it consists of  an even  number of vertices,  and is
  \emph{trivial}  if it  consists of  only one  vertex.  We  denote by
  $o(G-S)$  the number  of odd  components of  $G-S$.

  We  shall make  use  of  the following  known  facts about  matching
  covered graphs and tight cuts.

  \begin{thm}[Tutte~\cite{Tutte47}]
    \label{thm:tutte}
    A graph $G$ has a perfect  matching if and only if $o(G-S)\le |S|$
    for every subset $S$ of $V(G)$.
 \end{thm}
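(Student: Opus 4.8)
The plan is to treat the two implications separately. Necessity is a short counting argument: if $M$ is a perfect matching of $G$ and $S\subseteq V(G)$, then every odd component $O$ of $G-S$ fails to be matched within itself, so $M$ sends at least one vertex of $O$ to a neighbour outside $O$; as $G-S$ has no edges between distinct components, that neighbour lies in $S$, and distinct odd components contribute distinct such vertices, giving $o(G-S)\le |S|$.

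For sufficiency I would argue by contradiction, passing first to an edge-maximal counterexample $G$: one checks that adding an edge cannot increase $o(G-S)$ for any $S$, so Tutte's condition is preserved, and that (applying the condition with $S=\emptyset$) $G$ has no odd component, hence $|V(G)|$ is even. Let $U$ be the set of vertices adjacent to every other vertex. The crux is the claim that $G-U$ is a disjoint union of complete graphs. Granting this, one finishes at once: match each even component of $G-U$ within itself and each odd component to within a single exposed vertex; by the condition applied with $S=U$ there are at most $|U|$ exposed vertices, and a parity count (using that $|V(G)|$ is even) shows that $|U|$ minus the number of exposed vertices is a nonnegative even number. Matching the exposed vertices into $U$ and then matching the remaining, evenly many, universal vertices among themselves yields a perfect matching of $G$, a contradiction.

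So it remains to prove the claim, and this is where the real work lies. If $G-U$ were not a disjoint union of cliques it would contain vertices $x,y,z$ with $xy,yz\in E(G)$ but $xz\notin E(G)$, and since $y\notin U$ there is a vertex $w$ with $yw\notin E(G)$; one checks $x,y,z,w$ are pairwise distinct. By edge-maximality $G+xz$ and $G+yw$ have perfect matchings $M_1$ and $M_2$, and each must use its new edge, so $xz\in M_1$ and $yw\in M_2$. Now examine $M_1\triangle M_2$, a disjoint union of even alternating cycles; the edges $xz$ and $yw$ lie on such cycles. If they lie on distinct cycles, say $yw$ on the cycle $C$, then combining the off-$C$ part of $M_2$ with the $M_1$-edges of $C$ gives a perfect matching of $G$ avoiding both $xz$ and $yw$, a contradiction. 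The remaining case, where $xz$ and $yw$ lie on a common cycle $C$, is the main obstacle: here I would index $C$ so that $yw$ is one of its edges, locate $xz$ as the edge $v_{2a+1}v_{2a+2}$ of $C$, and use whichever of the available edges $yz$, $yx$ joins $y$ to the vertex $v_{2a+1}$ in order to split $C$ into two even paths, matched by $M_1$-edges and $M_2$-edges respectively; together with the $M_1$-edges outside $C$ this again produces a perfect matching of $G$, the final contradiction. The only delicate points are verifying that the three vertex subsets along $C$ really are even-sized (which forces $C$ to have length at least $6$) and that the constructed matching uses neither $xz$ nor $yw$; everything else is bookkeeping.
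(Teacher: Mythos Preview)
Your proof is correct; it is the classical argument due to Lov\'asz via an edge-maximal counterexample, and the bookkeeping you flag (distinctness of $x,y,z,w$, evenness of the three arcs of $C$ in the same-cycle case, and that the constructed matching avoids both $xz$ and $yw$) all checks out. Note, however, that the paper does not actually prove Theorem~\ref{thm:tutte}: it is stated as a known result with a citation to Tutte~\cite{Tutte47} and used as a black box thereafter, so there is no proof in the paper to compare against.
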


  \begin{cor}
    \label{cor:barrier}
    Let $G$  be a matching  covered graph and let  $S$ be a  subset of
    $V(G)$.  Then, $o(G-S)  \le |S|$,  with  equality only  if $S$  is
    independent and $G-S$ has no even components.
  \end{cor}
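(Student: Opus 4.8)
The plan is to obtain the inequality immediately from Tutte's theorem and then to analyse the perfect matchings of $G$ in the extremal case. Since $G$ is matching covered it has at least one perfect matching, so Theorem~\ref{thm:tutte} gives $o(G-S)\le|S|$ for every $S\subseteq V(G)$; this is the first assertion. For the equality case, suppose $o(G-S)=|S|$; we may assume $S\neq\emptyset$. Fix an arbitrary perfect matching $M$ of $G$. Each odd component $K$ of $G-S$ has odd order, so $M$ cannot saturate $V(K)$ using only edges inside $K$; hence $M$ contains an edge joining a vertex of $K$ to a vertex of $S$. Picking one such edge for every odd component produces an injection from the set of odd components of $G-S$ into $S$, injective because these edges all lie in $M$ and hence are pairwise disjoint; as there are exactly $|S|$ odd components, this injection is in fact a bijection, and therefore every vertex of $S$ is matched by $M$ to a vertex lying in some odd component of $G-S$.

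Two conclusions follow. First, $M$ has no edge with both ends in $S$; since this holds for \emph{every} perfect matching $M$ and $G$ is matching covered, no edge of $G$ has both ends in $S$, that is, $S$ is independent. Second, suppose for contradiction that $G-S$ has an even component $H$. As $H$ is a component of $G-S$, every edge of $G$ meeting $V(H)$ has its other end in $V(H)\cup S$; and since $G$ is connected while $V(H)$ is a proper subset of $V(G)$ (this is where $S\neq\emptyset$ enters), some edge $e$ of $G$ joins $V(H)$ to $S$. Because $G$ is matching covered, $e$ lies in some perfect matching $M'$ — but then the end of $e$ in $S$ is matched by $M'$ to a vertex of the even component $H$, contradicting the conclusion of the previous paragraph applied to $M'$. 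Hence $G-S$ has no even component, completing the proof.

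The argument is short; its only real content is the twofold use of the matching-covered hypothesis — once to pass from ``no perfect matching uses an edge inside $S$'' to ``$S$ is edge-free'', and once to extend a chosen edge from $S$ to an even component of $G-S$ to a perfect matching and thereby contradict the counting bijection. The point that needs the most care is the observation that an even component $H$ of $G-S$ sends edges only into $S$, which is exactly what allows a single edge of $\partial(V(H))$ to be completed to a perfect matching that breaks the bijection; everything else is bookkeeping.
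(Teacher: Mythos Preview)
Your proof is correct. Both your argument and the paper's derive the inequality from Tutte's theorem and then, in the equality case, extend a suitable edge to a perfect matching via the matching-covered hypothesis; they diverge in how the contradiction is extracted. The paper argues by modifying $S$: if $S$ spans an edge $e=vw$, it deletes $v,w$ and applies Tutte's condition in $G-\{v,w\}$ to obtain $o(G-S)\le|S|-2$; if $G-S$ has an even component $K$, it enlarges $S$ to $U:=S+x$ for some $x\in V(K)$ adjacent to $S$, observes that $U$ is not independent, and invokes the previous case to again reach $o(G-S)\le|S|-2$. You instead set up, for each perfect matching $M$, a bijection between $S$ and the odd components of $G-S$ forcing every vertex of $S$ to be $M$-matched into an odd component, and then note that a perfect matching containing an edge inside $S$, or an edge from $S$ to an even component, violates this bijection. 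Your route is slightly more direct and self-contained; the paper's yields the quantitatively sharper conclusion $o(G-S)\le|S|-2$ whenever either condition fails, a bonus not needed for the corollary as stated.
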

  \begin{proof}
    A matching covered graph has perfect matchings, hence the asserted
    inequality holds, by Theorem~\ref{thm:tutte}.  Suppose that $S$ is
    not independent, let $e:=vw$ be an edge of $G$ having both ends in
    $S$. As $G$  is matching covered, it has a  perfect matching, $M$,
    that contains the  edge $e$. Let $T:=S-\{v,w\}$. Then,  $M-e$ is a
    perfect  matching of  $H:=G-\{v,w\}$. By  Theorem~\ref{thm:tutte},
    $o(G-S) = o(H-T)  \le |T| = |S|-2$. The inequality  is thus strict
    in  this case.  Now  suppose  that $G-S$  has  an even  component,
    $K$. As $G$ is connected, $K$ has a vertex, $x$, which is adjacent
    to a  vertex, $y$ of  $S$. Let  $U:=S+x$.  Every odd  component of
    $G-S$ is  an odd  component of  $G-U$. In  addition, $K-x$  has at
    least  one  odd component  which  is  not  a component  of  $G-S$.
    Moreover, the set $U$ is not independent, hence $o(G-S) \le o(G-U)
    -1 \le |U|-3 = |S|-2$.
  \end{proof}

  \begin{pro}[\cite{ELP82}]
    \label{pro:I-U}
    Let  $G$ be  a matching  covered graph  and let  $\partial(X)$ and
    $\partial(Y)$  be  two  tight  cuts  such that  $|X  \cap  Y|$  is
    odd. Then $\partial(X  \cap Y)$ and $\partial(X \cup  Y)$ are also
    tight in $G$. Furthermore, no edge connects $X\cap\overline{Y}$ to
    $\overline{X}\cap Y$. \qed
  \end{pro}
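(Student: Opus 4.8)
The plan is to argue directly from the definition, fixing an arbitrary perfect matching $M$ of $G$ and counting how $M$ distributes over the four cells of the partition of $V(G)$ induced by $X$ and $Y$. Write $A := X\cap Y$, $B := X\cap\overline{Y}$, $C := \overline{X}\cap Y$ and $D := \overline{X}\cap\overline{Y}$, and for each pair of distinct cells let $m_{PQ}$ denote the number of edges of $M$ with one end in $P$ and the other in $Q$.

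First I would record the relevant parities. Since $G$ has a perfect matching, for every $Z\subseteq V(G)$ and every perfect matching the number of its edges in $\partial(Z)$ is congruent to $|Z|\pmod 2$ (the vertices of $Z$ not covered by an edge of $\partial(Z)$ are covered in pairs); hence a tight cut necessarily has odd shores. Thus $|X|$ and $|Y|$ are odd, and together with the hypothesis that $|A| = |X\cap Y|$ is odd this gives that $|X\cup Y| = |X| + |Y| - |X\cap Y|$ is odd as well. Consequently $|\partial(A)\cap M|$ and $|\partial(X\cup Y)\cap M| = |\partial(D)\cap M|$ are both odd, in particular each is at least $1$.

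Next comes the short counting step. Expressing the cuts through the $m_{PQ}$, tightness of $\partial(X)$ and $\partial(Y)$ reads
\[
m_{AC} + m_{AD} + m_{BC} + m_{BD} = 1, \qquad m_{AB} + m_{AD} + m_{BC} + m_{CD} = 1,
\]
while
\[
|\partial(A)\cap M| = m_{AB} + m_{AC} + m_{AD}, \qquad |\partial(D)\cap M| = m_{AD} + m_{BD} + m_{CD}.
\]
Adding the two equalities gives $m_{AB} + m_{AC} + 2m_{AD} + 2m_{BC} + m_{BD} + m_{CD} = 2$, whereas adding the two displayed expressions and using the parity bound gives $m_{AB} + m_{AC} + 2m_{AD} + m_{BD} + m_{CD} = |\partial(A)\cap M| + |\partial(D)\cap M| \ge 2$. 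Subtracting the inequality from the equality yields $2m_{BC}\le 0$, so $m_{BC} = 0$; and then $|\partial(A)\cap M| + |\partial(D)\cap M| = 2$ with both summands odd and positive forces $|\partial(A)\cap M| = |\partial(D)\cap M| = 1$. As $M$ was an arbitrary perfect matching, $\partial(X\cap Y)$ and $\partial(X\cup Y)$ are tight.

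Finally, for the last assertion: if some edge $e$ of $G$ joined $X\cap\overline{Y} = B$ to $\overline{X}\cap Y = C$, then, $G$ being matching covered, $e$ would lie in some perfect matching $M$, giving $m_{BC}\ge 1$ for that $M$ — contradicting what was just shown. Hence no such edge exists. There is essentially no obstacle in this argument; the only point that requires care is the observation that tightness forces odd shores, since that is precisely what makes the parity lower bounds on $\partial(A)$ and $\partial(D)$ available and lets them combine exactly with the two tightness equations.
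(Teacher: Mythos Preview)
Your argument is correct: it is the standard uncrossing computation, and every step checks out. The parity observation that a tight cut must have odd shores is exactly what is needed to get the lower bounds $|\partial(A)\cap M|,\,|\partial(D)\cap M|\ge 1$, and the subtraction yielding $m_{BC}=0$ is clean.

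Note, however, that the paper does not supply its own proof of this proposition: it is stated with a citation to Edmonds, Lov\'asz and Pulleyblank~\cite{ELP82} and closed with a \qed, so there is nothing to compare against. Your write-up is essentially the classical proof one finds in the literature (e.g., in Lov\'asz--Plummer), and would serve perfectly well as a self-contained justification here.
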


  \begin{pro}[\cite{LP86}]
    \label{pro:mc=>2-connected}
    Every  matching  covered  graph  on   four  or  more  vertices  is
    2-connected. \qed
  \end{pro}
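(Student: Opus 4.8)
The plan is to reduce the statement to the claim that a matching covered graph $G$ has no cut vertex. Indeed, $G$ is connected by definition, and since it has a perfect matching $|V(G)|$ is even, so the hypothesis $|V(G)|\ge 4$ forces $|V(G)|>2$; hence ``$G$ has no cut vertex'' already gives $2$-connectedness under the convention of~\cite{bm08} (where a $2$-connected graph is required to have more than two vertices). So the whole content is: show that no matching covered graph has a cut vertex.

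To do this I would argue by contradiction. Suppose $v$ is a cut vertex of $G$, and let $K_1,\dots,K_t$ with $t\ge 2$ be the components of $G-v$. Apply Corollary~\ref{cor:barrier} with $S:=\{v\}$: it yields $o(G-v)\le 1$. On the other hand, $|V(G-v)|=|V(G)|-1$ is odd, so the number of odd components of $G-v$ is odd; together with $o(G-v)\le 1$ this forces $o(G-v)=1$, i.e.\ $o(G-v)=|S|$. But then the equality clause of Corollary~\ref{cor:barrier} says that $G-v$ has no even component, so \emph{all} $t\ge 2$ components of $G-v$ are odd, contradicting $o(G-v)=1$. This contradiction shows that $G-v$ is connected for every vertex $v$, completing the argument.

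If one prefers a self-contained route not invoking Corollary~\ref{cor:barrier}, the same contradiction is reachable directly: fix a perfect matching $M$ of $G$ (it exists because $G$ is matching covered); the vertex $v$ is $M$-matched into exactly one component of $G-v$, and deleting the corresponding edge exhibits a perfect matching of each of the other components and a near-perfect matching of that one, so exactly one component — say $K_1$ — is odd and all others are even. Since the orders $|K_i|$ do not depend on $M$, \emph{every} perfect matching of $G$ matches $v$ into $K_1$. However, as $v$ is a cut vertex of the connected graph $G$ it has a neighbour $z\in K_2$, and as $G$ is matching covered the edge $vz$ lies in some perfect matching, which then matches $v$ outside $K_1$ — a contradiction. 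I do not expect any genuine obstacle here; the only points needing a little care are the parity count (an odd number of odd components, since $|V(G)|-1$ is odd) and the observation that the hypothesis $|V(G)|\ge 4$ is used only to exclude $K_2$, which has no cut vertex yet is not $2$-connected.
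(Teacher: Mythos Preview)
Your argument is correct --- both routes work. The first one is a clean application of Corollary~\ref{cor:barrier}: from $o(G-v)\le 1$ and parity you get $o(G-v)=1$, and then the equality clause forces all components of $G-v$ to be odd, which is incompatible with $v$ being a cut vertex. The self-contained second argument is equally valid and is essentially the standard textbook proof.

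There is, however, nothing to compare against: the paper does not prove this proposition. It is stated with a citation to~\cite{LP86} and closed immediately with a \qed, i.e.\ it is quoted as a known fact from Lov\'asz and Plummer's book rather than reproved. So your proposal supplies a proof where the paper deliberately omits one. If anything, your first version fits the paper's internal logic nicely, since it leverages Corollary~\ref{cor:barrier}, which the paper has just established.

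One small remark on presentation: in your final parenthetical you write ``exclude $K_2$'', meaning the graph on two vertices; since the paper uses $K$, $K_1$, $K_2$ elsewhere for components, it may be safer to say ``exclude the single-edge graph'' to avoid a notational clash.
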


  \begin{pro}[\cite{LOVASZ1987187}]
    \label{pro:tight-contraction}
    Let $G$ be a matching covered graph, and let $C$ be a tight cut of
    $G$. Then  both $C$-contractions are matching  covered.  Moreover,
    if $G'$ is  a $C$-contraction of $G$  then a tight cut  of $G'$ is
    also a tight cut  of $G$.  Conversely, if a tight cut  of $G$ is a
    cut of $G'$ then it is also tight in $G'$. \qed
  \end{pro}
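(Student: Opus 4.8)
The plan is to prove the three assertions by setting up a dictionary between perfect matchings of $G$ and of a $C$-contraction. Write $C = \partial(X)$ and put $G_1 := \Contra{G}{X}{x}$; by the symmetry between the two $C$-contractions it suffices to argue for $G_1$. If $M$ is a perfect matching of $G$, then by tightness of $C$ there is a unique edge $e_M = u_Mv_M$ in $M \cap C$, say with $u_M \in X$; hence every vertex of $\overline{X}$ other than $v_M$ is matched by $M$ inside $\overline{X}$, so $M{\downarrow} := (M \cap E(G[\overline{X}])) \cup \{e_M\}$, where $e_M$ is now read as the edge of $G_1$ joining $x$ to $v_M$, is a perfect matching of $G_1$. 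Conversely, given a perfect matching $M'$ of $G_1$, its edge $f$ at $x$ is the image of some edge $e = uv \in C$ with $u \in X$; as $G$ is matching covered, $e$ lies in some perfect matching of $G$, whose restriction to $E(G[X])$ is a perfect matching $N$ of $G[X] - u$, and then $M'{\uparrow} := N \cup \{e\} \cup (M' \setminus \{f\})$ is a perfect matching of $G$ whose contraction is $M'$. I also record the bookkeeping fact underlying everything: under the natural identification of $E(G_1)$ with the edges of $G$ not internal to $X$, for every $Z \subseteq V(G) \setminus X$ one has $\partial_G(Z) = \partial_{G_1}(Z)$ — an edge with both ends in $\overline{X}$ is unchanged, an edge of $C$ lies in $\partial_G(Z)$ exactly when its end in $\overline{X}$ lies in $Z$, exactly when its image lies in $\partial_{G_1}(Z)$, and an edge with both ends in $X$ lies in neither — and consequently $|M \cap \partial_G(Z)| = |M{\downarrow} \cap \partial_{G_1}(Z)|$ for every perfect matching $M$ of $G$ and $|M' \cap \partial_{G_1}(Z)| = |M'{\uparrow} \cap \partial_G(Z)|$ for every perfect matching $M'$ of $G_1$.

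First I would verify that $G_1$ is matching covered. It is connected, being obtained from the connected graph $G$ by contraction, and it has an edge, since a tight cut is nonempty (as $G$ has a perfect matching and hence a nonempty one) and $C = \partial_{G_1}(x)$. For edge-coverage: an edge of $G_1$ lying inside $\overline{X}$ belongs to some perfect matching $M$ of $G$, hence to $M{\downarrow}$; an edge of $G_1$ incident with $x$ is the image of some edge $e$ of $C$, and choosing a perfect matching $M$ of $G$ with $e \in M$ we get $e = e_M$, so that edge belongs to $M{\downarrow}$. This settles the first assertion.

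For the second assertion, let $D'$ be a tight cut of $G_1$, and write $D' = \partial_{G_1}(Z)$ with $Z$ the shore not containing $x$, so $Z \subseteq V(G) \setminus X$ and $D' = \partial_G(Z) =: D$ as edge sets. For every perfect matching $M$ of $G$ the bookkeeping fact gives $|M \cap D| = |M{\downarrow} \cap D'| = 1$, hence $D$ is tight in $G$. For the third assertion, let $D$ be a tight cut of $G$ that is also a cut of $G_1$, and let $Z$ be the shore of $D$ in $G_1$ not containing $x$, so $Z \subseteq V(G) \setminus X$ and $D = \partial_{G_1}(Z) = \partial_G(Z)$ as edge sets. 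Then for every perfect matching $M'$ of $G_1$ the bookkeeping fact gives $|M' \cap D| = |M'{\uparrow} \cap D| = 1$, hence $D$ is tight in $G_1$.

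I expect essentially all of the genuine content to lie in the upward half of the dictionary used for the third assertion — namely that, for the particular edge $e = uv \in C$ (with $u \in X$) whose image $M'$ uses at the vertex $x$, the graph $G[X] - u$ admits a perfect matching. This is exactly where matching-coveredness of $G$ is invoked, through a perfect matching of $G$ containing the edge $e$ — equivalently, through the already-established matching-coveredness of the companion contraction $\Contra{G}{\overline{X}}{\overline{x}}$, in which $e$ lies in some perfect matching. Everything else is the routine verification assembled above: that contracting $X$ leaves intact whichever shore of $D$ contains $X$, and that a matching's intersection with a coboundary is counted the same way in $G$ and in $G_1$.
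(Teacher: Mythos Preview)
The paper does not give its own proof of this proposition: it is quoted from Lov\'asz~\cite{LOVASZ1987187} and closed with a \qed\ immediately after the statement. Your argument is correct and is the standard one --- setting up the two-way correspondence between perfect matchings of $G$ and of a $C$-contraction, and then reading off both directions of the tightness claim from the bookkeeping identity $|M\cap\partial_G(Z)|=|M{\downarrow}\cap\partial_{G_1}(Z)|$. The one nontrivial step you correctly isolate is the upward lift, which needs that $G[X]-u$ has a perfect matching for the specific $u\in X$ determined by the edge of $M'$ at $x$; invoking matching-coveredness of $G$ on the edge $e\in C$ is exactly the right move.
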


  \begin{cor}\label{cor:shores-connected}
    Let $G$ be a matching covered  graph, and let $C=\partial(X)$ be a
    tight cut of $G$.  Then, both shores $X$ and $\overline{X}$ of $C$
    induce connected graphs.
  \end{cor}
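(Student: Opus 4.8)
The plan is to pass to a $C$-contraction and invoke 2-connectivity. By the symmetry between $X$ and $\overline{X}$ it suffices to prove that $G[X]$ is connected. Set $G' := \Contra{G}{\overline{X}}{\overline{x}}$, which is matching covered by Proposition~\ref{pro:tight-contraction}. The observation that makes everything work is that deleting the contraction vertex recovers the induced subgraph: $G' - \overline{x}$ is precisely $G[X]$, since forming $G'$ replaces $\overline{X}$ by the single vertex $\overline{x}$ and turns $\partial(X)$ into the edges at $\overline{x}$, so removing $\overline{x}$ leaves exactly the vertices of $X$ with exactly the edges of $G[X]$.

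First I would dispose of the degenerate case. Since $G'$ is matching covered it has a perfect matching, so $|V(G')| = |X| + 1$ is even; in particular $|X|$ is odd, so either $|X| = 1$ or $|X| \ge 3$. If $|X| = 1$ then $G[X]$ is a single vertex and there is nothing to prove. Otherwise $|X| \ge 3$, hence $|V(G')| = |X| + 1 \ge 4$, and Proposition~\ref{pro:mc=>2-connected} applies to give that $G'$ is 2-connected. Deleting the single vertex $\overline{x}$ from a 2-connected graph leaves a connected graph, so $G[X] = G' - \overline{x}$ is connected. Running the same argument with $\Contra{G}{X}{x}$ in place of $G'$ shows that $G[\overline{X}]$ is connected as well, completing the proof.

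I do not expect any real obstacle here; the only point requiring a moment's care is the small case $|X| = 1$, together with the remark that $|X| = 2$ cannot occur (because $G'$ is matching covered, hence of even order), which is exactly what guarantees $|V(G')| \ge 4$ rather than merely $|V(G')| \ge 3$ and thus lets us use 2-connectivity. All the substantive content is imported from Propositions~\ref{pro:tight-contraction} and~\ref{pro:mc=>2-connected}. (Alternatively, one could argue directly: if $G[X]$ split into two parts with no edge between them, a parity count against any perfect matching crossing $C$ through one part and then one crossing through the other would give two incompatible parities for the sizes of the parts; but the contraction route above is shorter.)
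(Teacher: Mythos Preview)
Your proof is correct and follows essentially the same route as the paper: contract one shore, observe that the contraction is matching covered (Proposition~\ref{pro:tight-contraction}) and hence 2-connected (Proposition~\ref{pro:mc=>2-connected}), and conclude that deleting the contraction vertex leaves the other shore connected. If anything, you are slightly more careful than the paper, which invokes Proposition~\ref{pro:mc=>2-connected} without explicitly checking that the contraction has at least four vertices; your parity remark that $|X|$ is odd, so $|X|=1$ or $|X|\ge 3$, cleanly handles this.
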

  \begin{proof}
    The  $C$-contraction  $G':=\Contra{G}{X}{x}$  of $G$  is  matching
    covered,       hence       it       is       2-connected,       by
    Proposition~\ref{pro:mc=>2-connected}.  Thus, $G'-x$ is connected.
    In  other words,  $\overline{X}$ induces  a connected  subgraph of
    $G$. Likewise, $X$ also induces a connected subgraph of $G$.
  \end{proof}

  If $C$ is a tight cut of $G$, then both  $C$-contractions of $G$ are
  matching covered.   A nontrivial tight cut  may help us to  reduce a
  matching covered graph  to smaller matching covered  graphs.  We may
  apply to $G$ a procedure, called a \emph{tight cut decomposition} of
  $G$, which produces a list of bricks and braces.  If $G$ itself is a
  brick or a brace then the list consists of just $G$.  Otherwise, $G$
  has a nontrivial tight cut, $C$.  Then, both $C$-contractions of $G$
  are  matching covered.   One  may recursively  apply  the tight  cut
  decomposition  procedure to  each $C$-contraction  of $G$,  and then
  combine the resulting lists to  produce a tight cut decomposition of
  $G$ itself.

  \begin{thm}[Lovász~\cite{LOVASZ1987187}]
    Any two applications  of the tight cut  decomposition procedure to
    $G$ produce  the same list  of bricks  and braces, up  to multiple
    edges. \qed
  \end{thm}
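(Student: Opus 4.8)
The plan is to argue by strong induction on $|V(G)|$. If $G$ is itself a brick or a brace, the procedure stops with the one-element list consisting of $G$, so assume $G$ has a nontrivial tight cut and that the theorem holds for every matching covered graph with fewer vertices than $G$. Then every matching covered graph $H$ with $|V(H)|<|V(G)|$ has a well-defined list $\mathcal{B}(H)$ of bricks and braces; I also note that $\mathcal{B}$ is insensitive to edge multiplicities, since replacing an edge by parallel copies changes neither the perfect matchings (as sets of vertex pairs) nor which cuts are tight --- this is exactly why the conclusion holds only ``up to multiple edges''. For a nontrivial tight cut $C=\partial(X)$, both $C$-contractions of $G$, which I write $G/X$ and $G/\overline{X}$, are matching covered (Proposition~\ref{pro:tight-contraction}) and have fewer vertices than $G$, so any tight cut decomposition of $G$ that begins by splitting along $C$ outputs exactly $f(C):=\mathcal{B}(G/X)\cup\mathcal{B}(G/\overline{X})$. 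The theorem thus reduces to showing $f(C)=f(D)$ for any two nontrivial tight cuts $C=\partial(X)$, $D=\partial(Y)$ of $G$.

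\noindent\textbf{Laminar cuts.} First I would handle the case where $C$ and $D$ are laminar. Interchanging a cut with its complement if necessary, I may assume $X\subseteq Y$ with $X\neq Y$, so $Y':=Y\setminus X$ is nonempty and $\overline{X}$ is the disjoint union of $\overline{Y}$ and $Y'$. The observations driving the argument are that $D$ descends to a nontrivial tight cut of $G/X$ (the set $Y'\cup\{x\}$ has coboundary $D$ there), that $C$ descends likewise to a nontrivial tight cut of $G/\overline{Y}$, and that the graph $K$ obtained from $G$ by contracting each of the two disjoint sets $X$ and $\overline{Y}$ is reached whichever contraction is performed first. Splitting $G/X$ along $D$ and $G/\overline{Y}$ along $C$ and applying the induction hypothesis to these strictly smaller graphs yields $\mathcal{B}(G/X)=\mathcal{B}(G/Y)\cup\mathcal{B}(K)$ and $\mathcal{B}(G/\overline{Y})=\mathcal{B}(K)\cup\mathcal{B}(G/\overline{X})$, whence $f(C)=\mathcal{B}(G/X)\cup\mathcal{B}(G/\overline{X})=\mathcal{B}(G/Y)\cup\mathcal{B}(K)\cup\mathcal{B}(G/\overline{X})=\mathcal{B}(G/Y)\cup\mathcal{B}(G/\overline{Y})=f(D)$.

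\noindent\textbf{Crossing cuts.} Suppose now $C$ and $D$ cross, and set $X_1=X\cap Y$, $X_2=X\cap\overline{Y}$, $X_3=\overline{X}\cap Y$, $X_4=\overline{X}\cap\overline{Y}$, all nonempty. Each shore of a tight cut has odd cardinality (if $|C\cap M|=1$ then $|X|-1$ vertices of $X$ are matched inside $X$), so exactly one of the diagonal pairs $\{X_1,X_4\}$, $\{X_2,X_3\}$ consists of odd sets; replacing $Y$ by $\overline{Y}$ if needed, assume $|X_1|,|X_4|$ are odd. Proposition~\ref{pro:I-U} applied to $\partial(X)$ and $\partial(Y)$ then tells me that $\partial(X_1)$ and $\partial(X_4)=\partial(X\cup Y)$ are tight and that no edge joins $X_2$ to $X_3$. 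If $|X_1|\geq2$, then $\partial(X_1)$ is a nontrivial tight cut laminar with (and distinct from) both $C$ and $D$, so the laminar case gives $f(C)=f(\partial(X_1))=f(D)$; likewise via $\partial(X_4)$ if $|X_4|\geq2$. The delicate remaining case is $X_1=\{u\}$, $X_4=\{v\}$, so $V(G)=\{u,v\}\cup X_2\cup X_3$ with no edge between $X_2$ and $X_3$. Here I would check that $G/X$ and $G/\overline{Y}$ are isomorphic up to edge multiplicities --- each is $G[X_3]$ together with two extra vertices attached to $X_3$ exactly as $u$ and as $v$ are --- the only possible difference being whether the edge joining those two extra vertices is present; and that it must in fact be present on both sides, for otherwise one of $u$, $v$ would have all its neighbours in $X_3$, forcing its unique matching edge to be a prescribed crossing edge of $\partial(X)$ or of $\partial(Y)$ in every perfect matching, hence barring some edge of $G$ from all perfect matchings (impossible since $G$ is matching covered) and leaving $X_2$ as a proper nonempty union of components of $G$. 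Hence $\mathcal{B}(G/X)=\mathcal{B}(G/\overline{Y})$ and, symmetrically, $\mathcal{B}(G/\overline{X})=\mathcal{B}(G/Y)$, so $f(C)=f(D)$.

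\noindent I expect the main obstacle to be precisely this last sub-case of the crossing case. Normally the crossing case reduces to the laminar one because Proposition~\ref{pro:I-U} hands us a tight cut ($\partial(X\cap Y)$, or its complementary companion) that is laminar with both $C$ and $D$; but when both of these cuts happen to be trivial that route is closed, and one is forced to compare the four one-step contractions of $G$ directly, relying on matching-coveredness to eliminate the degenerate incidence patterns. The remainder is routine bookkeeping with contractions and the induction hypothesis.
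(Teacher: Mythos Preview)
The paper does not actually prove this theorem: it is stated with a terminal \qed and attributed to Lov\'asz~\cite{LOVASZ1987187}, with no argument given. So there is no ``paper's proof'' to compare against; what you have written is a self-contained proof of a result the paper merely quotes.

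Your argument is essentially the standard one and is correct. The induction on $|V(G)|$, the reduction to showing $f(C)=f(D)$, the laminar case via the common refinement $K$, and the crossing case via Proposition~\ref{pro:I-U} are all as in Lov\'asz's original proof. The only place I would tighten the writing is the ``delicate'' sub-case $X_1=\{u\}$, $X_4=\{v\}$: your sentence combining the matching-covered contradiction and the ``$X_2$ as a union of components'' contradiction is doing two jobs at once and reads awkwardly. A cleaner route is to observe directly that $N(X_2)\subseteq\{u,v\}$ (since there are no $X_2$--$X_3$ edges), so if either of $u,v$ failed to have a neighbour in $X_2$ then the other would be a cut vertex, contradicting Proposition~\ref{pro:mc=>2-connected}; hence both $u$ and $v$ have neighbours in $X_2$, and the edge between the two contraction vertices is present in each of $G/X$ and $G/\overline{Y}$. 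With that in place your isomorphism-up-to-multiplicity claim is immediate. Also, make explicit that your unions $\mathcal{B}(\cdot)\cup\mathcal{B}(\cdot)$ are multiset unions (list concatenations), since repetitions matter.
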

  In particular, any  two applications of the  tight cut decomposition
  procedure yield the same number of bricks, which is called the {\it
    brick number} of $G$ and denoted by $b(G)$.

  The graph $G$ is \emph{bicritical}  if $G-S$ has a perfect matching,
  for each pair $S$ of vertices of  $G$. If $G$ is bicritical then, by
  Tutte's Theorem, every barrier of $G$ is trivial.

  \subsection{ELP-cuts}
  There  are two  types of  tight cuts  that play  a critical  role in
  studying matching theory.  A {\it barrier}  of $G$ is a nonempty set
  of vertices of $G$ such that  $o(G-B)=|B|$.  Moreover, $B$ is a {\it
    nontrivial  barrier}  if  $|B|\ge2$.   By  Tutte's  theorem,  each
  barrier $B$  of $G$  is an  independent set,  and all  components of
  $G-B$ are odd components. A cut $C$ is called a {\it barrier-cut} if
  there exists  a barrier $B$ and  a component $H$ of  $G-B$ such that
  $C=\partial(V(H))$. Clearly, a barrier-cut is a tight cut.

  A \emph{2-separation} of  $G$ is a pair $S$ of  vertices of $G$ such
  that $G-S$ is  not connected and each of the  components of $G-S$ is
  even. Let $\{u,v\}$ be a 2-separation of $G$, let $\{G_1,G_2\}$ be a
  partition    of    $G-\{u,v\}$.     Each    of    the    two    cuts
  $C':=\partial(V(G_1)+u)$  and  $C'':=\partial(V(G_1)+v)$ is  a  {\it
    2-separation  cut}  of   $G$  and  the  pair   $\{C',C''\}$  is  a
  \emph{2-separation cut pair} of $G$.

  Barrier-cuts  and 2-separation  cuts are  particular types  of tight
  cuts and are called {\it ELP cuts}, named after Edmonds, Lovász, and
  Pulleyblank, who proved  the following fundamental result.

  \begin{thm}[The ELP Theorem~\cite{ELP82}]
    \label{thm:elp}
    Every matching covered graph that has a nontrivial tight cut has
    either a nontrivial barrier or a 2-separation. \qed
  \end{thm}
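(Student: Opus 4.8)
The plan is to peel off, using Tutte's theorem, the case in which $G$ has a nontrivial barrier, and then to show that in the complementary case $G$ is bicritical and necessarily contains a vertex $2$-cut, which for a bicritical graph is automatically a $2$-separation.

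First I would record that a matching covered graph is bicritical unless it has a nontrivial barrier. Indeed, if $G-\{u,v\}$ has no perfect matching for some pair $\{u,v\}$, then by Theorem~\ref{thm:tutte} there is $S\subseteq V(G)\setminus\{u,v\}$ with $o\bigl(G-(S\cup\{u,v\})\bigr)>|S|$; putting $T:=S\cup\{u,v\}$, the parity $o(G-T)\equiv|V(G)|-|T|\equiv|T|\pmod 2$ rules out the value $|T|-1$, so Corollary~\ref{cor:barrier} forces $o(G-T)=|T|$, i.e.\ $T$ is a barrier with $\{u,v\}\subseteq T$, hence nontrivial. So we may assume $G$ is bicritical, and then every vertex $2$-cut $\{u,v\}$ of $G$ is a $2$-separation, since $G-\{u,v\}$ has a perfect matching and hence all of its components are even. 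It therefore suffices to exhibit one vertex $2$-cut of $G$.

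Next I would read structure off a nontrivial tight cut $\partial(X)$. Both shores of a tight cut are odd, so $|X|,|\overline X|\ge3$, and $G[X]$ and $G[\overline X]$ are connected by Corollary~\ref{cor:shores-connected}. Put $A:=N_G(\overline X)\cap X$ and $B:=N_G(X)\cap\overline X$. If $|A|\le1$, the one vertex of $A$ separates the nonempty set $X\setminus A$ from $\overline X$, contradicting $2$-connectivity (Proposition~\ref{pro:mc=>2-connected}); so $|A|\ge2$, and likewise $|B|\ge2$. Since $G$ is matching covered, each edge of $\partial(X)$ lies in a perfect matching, so $G[X]-a$ has a perfect matching for every $a\in A$; applying Theorem~\ref{thm:tutte} to $G[X]-a$ and the set $A\setminus\{a\}$ gives $o(G[X]-A)\le|A|-1$, and a parity count gives $o(G[X]-A)\equiv|A|-1\pmod 2$. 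Since every edge of $G$ joining $X$ to $\overline X$ has its end in $X$ inside $A$, the graph $G-A$ is the disjoint union of $G[X]-A$ and the single odd component $G[\overline X]$, so $o(G-A)=o(G[X]-A)+1$; were $o(G[X]-A)=|A|-1$, then $A$ would be a nontrivial barrier, contradicting bicriticality. Hence $o(G[X]-A)\le|A|-3$, which forces $|A|\ge3$, and symmetrically $|B|\ge3$; moreover $G[X]-A$ has at most $|A|-3$ components and $G[\overline X]-B$ at most $|B|-3$.

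It remains to produce a vertex $2$-cut of the bicritical graph $G$, and this is the step I expect to be the real obstacle. I would argue by matching surgery across $\partial(X)$. For distinct $a,a'\in A$, pick perfect matchings $M,M'$ of $G$ whose (unique) cut edges are incident with $a$ and with $a'$ respectively; their symmetric difference is a union of disjoint alternating cycles, and since $\partial(X)$ is tight, exactly one of these cycles meets $\partial(X)$, in precisely two edges, so it traces an $a$--$a'$ alternating path inside $G[X]$ and a corresponding alternating path inside $G[\overline X]$. Running this construction over three chosen vertices of $A$, and combining the resulting paths with near-perfect matchings of $G[X]$ missing prescribed vertices of $A$ (produced in the previous step), I would splice them together to pin down two vertices of $G$ whose deletion cuts some of the $\le|A|-3$ components of $G[X]-A$ (or of $G[\overline X]-B$) off from the rest of $G$. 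This is essentially the Edmonds--Lov\'asz--Pulleyblank statement that a $3$-connected bicritical matching covered graph has no nontrivial tight cut; carrying it through --- in particular identifying the correct pair of cut vertices --- absorbs virtually all of the work, whereas everything preceding it is routine given Tutte's theorem and the facts quoted above.
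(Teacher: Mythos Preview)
The paper does not prove this theorem: the \qed immediately after the statement signals that it is quoted from~\cite{ELP82}, and the paper only remarks that the original proof uses linear programming while Szigeti and Carvalho--Lucchesi--Murty later gave graph-theoretic proofs. There is therefore nothing in the paper to compare your argument against.

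As for your proposal itself: the reduction to the bicritical case is correct and standard, and the observation that in a bicritical graph every vertex $2$-cut is a $2$-separation is fine. But your final paragraph is not a proof; it is a hope. What you must show is that a bicritical matching covered graph with a nontrivial tight cut has a vertex $2$-cut, and this is precisely the hard content of the ELP theorem. The alternating-cycle observation gives you $a$--$a'$ paths inside $G[X]$ and $b$--$b'$ paths inside $G[\overline X]$, but this only speaks to connectivity \emph{within} the shores; it says nothing about where a global $2$-cut of $G$ should live. Your sentence ``splice them together to pin down two vertices of $G$ whose deletion cuts some of the components off'' neither names the two vertices nor explains why their removal disconnects $G$. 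You yourself concede that this step ``absorbs virtually all of the work''; the known combinatorial proofs (Szigeti; Carvalho--Lucchesi--Murty) require a genuine inductive or structural argument here, and your sketch does not supply a substitute. A minor slip in the preceding paragraph: you only established $o(G[X]-A)\le|A|-3$, a bound on \emph{odd} components; the claim that $G[X]-A$ has at most $|A|-3$ components in total does not follow from what you wrote.
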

  Their  proof  is  based on  linear  programming  techniques. Szigeti
  \cite{Szigeti02} gave  a purely graph theoretical  proof.  Carvalho,
  Lucchesi and Murty \cite{CLM18} provided an alternative proof.

  \subsection{Ultimately ELP cuts}
  We now define  a special type of tight  cut decomposition procedure.
  We  may  apply   to  $G$  a  procedure,  called   an  \emph{ELP  cut
    decomposition} of $G$, which produces a list of bricks and braces.
  If~$G$ itself is a  brick or a brace then the  list consists of just
  $G$.  Otherwise, by  the ELP Theorem, $G$ has a  nontrivial ELP cut,
  $C$.  Then, both $C$-contractions of  $G$ are matching covered.  One
  may recursively  apply the ELP  cut decomposition procedure  to each
  $C$-contraction  of $G$,  and then  combine the  resulting lists  to
  produce an  ELP cut decomposition of  $G$ itself.  Each cut  used in
  the ELP cut decomposition procedure  is said to be \emph{ultimately}
  an ELP cut.  It is easy to see that the following conjecture implies
  that every nontrivial tight cut of $G$ is ultimately an ELP cut.

  \begin{conj}{\em [Carvalho, Lucchesi and Murty \cite{CLM18}]}
    \label{conj}
    Let $C$  be a  nontrivial tight  cut of  a matching  covered graph
    $G$. Then, $G$ has a nontrivial ELP cut which is laminar with $C$.
  \end{conj}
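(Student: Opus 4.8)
The plan is to argue by induction on $|V(G)|$, exploiting the ELP Theorem (Theorem~\ref{thm:elp}) together with Proposition~\ref{pro:I-U} as the main technical engine for "uncrossing". Given a nontrivial tight cut $C=\partial(X)$, the ELP Theorem guarantees some nontrivial ELP cut $D=\partial(Y)$ in $G$; if $D$ is already laminar with $C$ we are done, so assume every nontrivial ELP cut crosses $C$. The first step is to fix notation for the four corner sets $X\cap Y$, $\overline{X}\cap Y$, $X\cap\overline{Y}$, $\overline{X}\cap\overline{Y}$, and recall that since both $C$ and $D$ are tight, exactly one corner on each side of $C$ has odd size; by relabelling $Y$ versus $\overline{Y}$ we may assume $|X\cap Y|$ is odd, so Proposition~\ref{pro:I-U} applies and tells us $\partial(X\cap Y)$ and $\partial(X\cup Y)$ are tight, and that there is no edge between $X\cap\overline{Y}$ and $\overline{X}\cap Y$.

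The second step is to pass to the $C$-contraction $G_1:=\Contra{G}{\overline{X}}{\overline x}$, which is matching covered on fewer vertices. Inside $G_1$, the set $Y\cap X$ (together with possibly $\overline x$, depending on which corner of $D$ is odd) gives a cut of $G_1$ that is tight in $G_1$ by Proposition~\ref{pro:tight-contraction}; one has to check it is \emph{nontrivial} in $G_1$, which is where the hypothesis that $D$ is a \emph{nontrivial} ELP cut and the structure of barriers/2-separations is used — a barrier $B$ of $G$ restricts to a barrier of $G_1$ once $\overline X$ is absorbed into the contraction vertex, and similarly a $2$-separation of $G$ restricts, so $D$ restricted to $G_1$ is again an ELP cut of $G_1$. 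Applying the induction hypothesis in $G_1$ to this tight cut yields a nontrivial ELP cut $D_1$ of $G_1$ laminar with it; the final step is to lift $D_1$ back to $G$ (it is a cut of $G$ since $\overline X$ is on one side), verify via Proposition~\ref{pro:tight-contraction} and the definitions that it remains a nontrivial ELP cut of $G$, and check that laminarity in $G_1$ plus the "no edge between opposite corners" conclusion of Proposition~\ref{pro:I-U} forces $D_1$ to be laminar with the original $C$ in $G$.

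I expect the main obstacle to be the \emph{non-triviality bookkeeping} across the contraction, and more seriously the possibility that \emph{every} ELP cut crossing $C$ becomes trivial in both $C$-contractions, so that the naive induction stalls. Handling this will require a case analysis on the \emph{type} of the crossing ELP cut $D$: when $D$ is a barrier-cut coming from a barrier $B$, one should choose the component of $G-B$ and the side of $C$ cleverly — intersecting $B$ with $X$ and with $\overline X$ and using Corollary~\ref{cor:barrier} to understand which pieces stay barriers — and when $D$ is a $2$-separation cut one argues with the separating pair $\{u,v\}$ and how it splits relative to $X$. A delicate sub-case is when the separating pair or part of the barrier lies on the boundary $\partial(X)$ itself; here one must use that $X$ and $\overline X$ both induce connected subgraphs (Corollary~\ref{cor:shores-connected}) to rebuild a genuine nontrivial ELP cut on one shore. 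Assembling these cases so that in every situation one produces \emph{some} nontrivial ELP cut laminar with $C$ — rather than merely laminar with a contracted image — is the crux of the argument.
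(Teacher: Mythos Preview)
Your overall architecture---induction on $|V(G)|$, start from an ELP cut $D$ crossing $C$, uncross via Proposition~\ref{pro:I-U}, pass to a contraction, lift back---is reasonable, but the direction in which you contract is the wrong one, and this breaks the lifting step.

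You contract along $C$ to obtain $G_1:=\Contra{G}{\overline{X}}{\overline{x}}$, find a nontrivial ELP cut $D_1$ of $G_1$ by induction, and then assert that $D_1$ ``remains a nontrivial ELP cut of $G$''. It is true that any cut of $G_1$ lifts to a cut of $G$ laminar with $C$ (one shore lies in $X$), and by Proposition~\ref{pro:tight-contraction} it is even tight in $G$; but being an \emph{ELP} cut does not transfer. Suppose $D_1$ is a barrier cut of $G_1$ associated with a barrier $B_1$ containing the contraction vertex $\overline{x}$. Then $B_1-\overline{x}\subseteq X$, and in $G$ the components of $G_1-B_1$ may all be joined to one another through the connected set $\overline{X}$, so $B_1-\overline{x}$ is not a barrier of $G$; and since $\overline{X}$ is an arbitrary odd connected shore with no distinguished vertex, there is no canonical $v\in\overline{X}$ to substitute for $\overline{x}$ so that $(B_1-\overline{x})+v$ becomes a barrier. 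The same obstruction arises when $D_1$ is a $2$-separation cut whose separating pair contains $\overline{x}$. Your proposal does not address this case, and it is not a bookkeeping issue: it is exactly the case that carries all the difficulty.

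The paper resolves this by contracting in the opposite direction. Rather than shrinking a shore of the arbitrary tight cut $C$, it shrinks a shore of an \emph{ELP} cut $D=\partial(Y)$ (a barrier cut in Case~\ref{cas:no-2-sep}, a $2$-separation cut in Case~\ref{cas:good}), obtaining $H:=\Contra{G}{\overline{Y}}{\overline{y}}$, and carries the nontrivial tight cut $I:=\partial(X\cap Y)$ into $H$. Induction is then applied in $H$ relative to $I$. The point of Lemma~\ref{lem:inheritance} is that when the contracted cut is a $2$-separation cut associated with $\{u_1,u_2\}$, a barrier or $2$-separation $S_H$ of $H$ lifts to one of $G$ by replacing $\overline{y}$ with the \emph{specific} vertex $u_2$ on the far side; this is precisely the structure an arbitrary tight cut lacks. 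Even so, the lifted barrier need not be $C$-sheltered, which is why the paper proves the stronger Main Theorem and still needs the further uncrossing Lemma~\ref{lem:barrier-sheltered} and the auxiliary Lemma~\ref{lem:t} to finish. In short, your plan would work if ELP-ness survived contraction along $C$, but it does not; reversing the roles of $C$ and $D$ in the contraction is the missing idea.
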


  In 2002,  the three authors  proved a particular and  very important
  case of the Conjecture,  in which $G$ is a brick and  $e$ is an edge
  of    $G$    such   that    $G-e$    is    matching   covered    and
  $b(G-e)=2$~\cite{CLM022}.   In~\cite{CLM18}  they  also  proved  the
  validity of the Conjecture for  bicritical graphs.

  In this paper we present a proof of a result, our Main Theorem, that
  implies Conjecture  \ref{conj}.  To state  the Main Theorem  we need
  one more definition and a simple result.

  Let $G$ be a matching covered graph, let $C$ be a tight   cut of $G$
  and let $S$  be a set of  vertices of $G$ which  is either a barrier
  or a 2-separation. The set $S$ is  \emph{$C$-sheltered} if $S$ is a
  subset of  a shore  of  $C$,  and  is \emph{$C$-avoiding}  if,  each
  ELP  cut associated     with $S$ is  laminar     with $C$. If $S$ is
  $C$-sheltered then, as each shore of $C$ induces a connected subgraph
  $H$  of  $G$ (Corollary~\ref{cor:shores-connected}), one   of the
  components of $G-S$ is a supergraph of $H$. It follows that if $S$ is
  $C$-sheltered then $S$ is $C$-avoiding. We record this    result for
  later reference.

  \begin{pro}
    \label{pro:sheltered}
    Let  $S$ be  either  a 2-separation  or a  barrier  of a  matching
    covered graph  $G$ and let $C$  be a tight  cut of $G$. If  $S$ is
    $C$-sheltered then some  cut associated with $S$ has  a shore that
    is a superset of a shore of  $C$, say, $X$, and all the other cuts
    associated  with   $S$  have   a  shore  that   is  a   subset  of
    $\overline{X}$. Consequently,  if $S$ is $C$-sheltered  then it is
    $C$-avoiding. \qed
  \end{pro}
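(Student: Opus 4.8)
The plan is to read the result off from Corollary~\ref{cor:shores-connected}, which guarantees that each shore of $C$ induces a connected subgraph. Write $C=\partial(X)$. Since $S$ is nonempty and the two shores of $C$ are disjoint, $S$ cannot be contained in both of them; so, as $S$ is $C$-sheltered, after possibly interchanging the names of the two shores we may assume $S\subseteq\overline{X}$, i.e.\ $S\cap X=\emptyset$. Then $G[X]$ is a subgraph of $G-S$; being connected and nonempty (a tight cut is nonempty, so both of its shores are nonempty), $G[X]$ lies inside a single component $K$ of $G-S$, that is, $X\subseteq V(K)$.

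Next I would distinguish the two types of $S$. Suppose first that $S$ is a barrier. The cuts associated with $S$ are precisely the cuts $\partial(V(H))$ as $H$ ranges over the components of $G-S$ (each of which is odd, by Corollary~\ref{cor:barrier}). In particular $\partial(V(K))$ is a barrier-cut associated with $S$, and it has a shore, namely $V(K)$, that is a superset of the shore $X$ of $C$. Any other cut associated with $S$ is $\partial(V(H))$ for a component $H\neq K$ of $G-S$, and $V(H)\subseteq V(G)\setminus(S\cup V(K))\subseteq\overline{X}$, so it has a shore contained in $\overline{X}$. Suppose instead that $S=\{u,v\}$ is a 2-separation. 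A 2-separation cut associated with $S$ comes from a partition $\{G_1,G_2\}$ of the components of $G-S$; the component $K$ lies in one of the two parts, say $V(K)\subseteq V(G_1)$, and then each of the two cuts $\partial(V(G_1)+u)$ and $\partial(V(G_1)+v)$ of the corresponding pair has a shore ($V(G_1)+u$, respectively $V(G_1)+v$) containing $V(K)\supseteq X$; equivalently, each has its complementary shore contained in $\overline{X}$.

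In both cases, at least one cut associated with $S$ has a shore that is a superset of the shore $X$ of $C$, and every cut associated with $S$ has a shore that is a subset of $\overline{X}$; by the definition of laminarity this says exactly that every cut associated with $S$ is laminar with $C$. The two assertions of the proposition follow, the second ($S$ is $C$-avoiding) being a direct restatement of the fact just established.

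I do not anticipate a real obstacle here: the statement is a bookkeeping consequence of the connectivity of the shores of $C$. The only points requiring a little care are the justification of the ``without loss of generality'' at the start (using that $S$ is nonempty and therefore meets only one shore of $C$) and, in the 2-separation case, that the component $K$ may be grouped with either side of a partition, so the argument should be phrased in terms of ``the part containing $K$'' rather than fixing which of $G_1$, $G_2$ it is.
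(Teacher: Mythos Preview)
Your argument is correct and follows exactly the approach the paper indicates: the proposition is stated with a \qed because the proof is the sentence preceding it, namely that by Corollary~\ref{cor:shores-connected} the shore of $C$ disjoint from $S$ induces a connected graph and hence lies in a single component of $G-S$. You have simply spelled out the bookkeeping in each of the two cases (barrier and 2-separation), which is precisely what the paper leaves implicit.
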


  We now  state our result, which,  in view of the  Proposition above,
  implies Conjecture~\ref{conj}.

  \begin{thm}[Main Theorem]
    \label{thm:laminar-elp}
    Let $C$  be a  nontrivial tight  cut of  a matching  covered graph
    $G$. Then either  $G$ has a $C$-sheltered nontrivial  barrier or a
    2-separation    cut   which    is    laminar    with   $C$    (see
    Example~\ref{exa:example-main}).
  \end{thm}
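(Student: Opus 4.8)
The plan is to prove the Main Theorem, which strengthens the ELP Theorem by producing an ELP structure that interacts nicely (laminarly) with the given tight cut $C=\partial(X)$.

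The plan is to argue by induction on $|V(G)|$, reducing through the two $C$-contractions of $G$ until we reach a configuration in which the ELP Theorem together with the uncrossing rule of Proposition~\ref{pro:I-U} can be applied directly. First I would dispose of the case in which $G$ is bicritical: then every barrier of $G$ is trivial, so a $C$-sheltered nontrivial barrier cannot exist and the statement reduces to the assertion that $G$ has a $2$-separation cut laminar with $C$; since a bicritical graph has no nontrivial barrier cut, this is exactly the content of Conjecture~\ref{conj} for bicritical graphs, which is already known. So assume $G$ is not bicritical; then by Theorem~\ref{thm:tutte} and Corollary~\ref{cor:barrier}, $G$ has a nontrivial barrier. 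Two configurations are then immediate: if some nontrivial barrier $B$ satisfies $B\subseteq X$ or $B\subseteq\overline{X}$ we are done at once; and if $G$ has a $2$-separation $\{u,v\}$ with $\{u,v\}\subseteq X$ or $\{u,v\}\subseteq\overline{X}$, then, since the opposite shore of $C$ induces a connected subgraph (Corollary~\ref{cor:shores-connected}) it lies in a single component of $G-u-v$, and grouping the remaining components against it produces a $2$-separation cut laminar with $C$. So assume neither of these configurations occurs.

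Next I would set up the main reduction. Writing $C=\partial(X)$, form the $C$-contractions $G_1:=\Contra{G}{\overline{X}}{\overline{x}}$ and $G_2:=\Contra{G}{X}{x}$; both are matching covered (Proposition~\ref{pro:tight-contraction}) and both have fewer vertices than $G$, since $|X|,|\overline{X}|\ge 3$. The key lifting lemma is that a nontrivial barrier of $G_1$ that does not contain $\overline{x}$ is, as a subset of $X$, a $C$-sheltered nontrivial barrier of $G$, and a $2$-separation of $G_1$ that avoids $\overline{x}$ is a $2$-separation of $G$ contained in $X$, hence yields a $2$-separation cut laminar with $C$ by the previous paragraph. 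The proof of this lemma uses only that $\overline{X}$ is connected — so it lies in a single component of the relevant vertex-deleted graph — and that $|\overline{X}|$ is odd, so that contracting $\overline{X}$ preserves the parity of that component; the symmetric statements hold for $G_2$ and $x$. Applying the ELP Theorem~\ref{thm:elp} to $G_1$ and to $G_2$, we may therefore assume that each $G_i$ is either a brick or a brace, or else has every nontrivial barrier and every $2$-separation passing through its contracted vertex. (In the latter case one also has the option of feeding a suitable nontrivial tight cut of $G_i$, still laminar with $C$, back into the induction hypothesis; some care is needed so that the laminarity with $C$ survives the recursion.)

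Finally I would attack the residual configuration. By the ELP Theorem~\ref{thm:elp} applied to $G$ itself, $G$ has a nontrivial barrier $B$ or a $2$-separation $T$; by the cases already handled, this set meets both $X$ and $\overline{X}$. If some component $K$ of $G-B$ (respectively of $G-T$) meets both $X$ and $\overline{X}$, then its associated ELP cut $D=\partial(V(K))$ crosses $C$; since $|V(K)|$ is odd, exactly one of $|V(K)\cap X|$ and $|V(K)\cap\overline{X}|$ is odd, so Proposition~\ref{pro:I-U} — applied to $C$ and $D$ with the shore of $C$ chosen to have odd intersection with $V(K)$ — produces a nontrivial tight cut of $G$ one of whose shores is a proper nonempty subset of a shore of $C$, and in particular laminar with $C$. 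I would then iterate this, using an extremal choice (for instance of the smallest possible $|V(K)\cap X|$) to force termination in a $C$-sheltered nontrivial barrier or a $2$-separation cut laminar with $C$, contradicting the standing assumption; the complementary sub-case, in which no component of $G-B$ (or $G-T$) crosses $C$, is handled separately by a direct analysis of how the edges of $C$ attach to $B$ (or $T$).

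The hard part will be this last stage: carrying out the uncrossing so that it delivers an \emph{ELP} structure respecting $C$ rather than merely a tight cut, and untangling the several sub-cases — barrier cut versus $2$-separation cut, crossing versus non-crossing components, and the residual brick/brace situation — without losing control of laminarity with $C$. I expect this is where the bulk of the paper's case analysis lies, and where the phenomenon illustrated by Example~\ref{exa:example-main} (showing why both alternatives in the Main Theorem are genuinely needed) becomes relevant; everything else above is essentially bookkeeping around Proposition~\ref{pro:I-U}, Corollary~\ref{cor:shores-connected} and Proposition~\ref{pro:sheltered}.
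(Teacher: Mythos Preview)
Your overall architecture---induction on $|V(G)|$, invoke the ELP Theorem, uncross via Proposition~\ref{pro:I-U}---is the paper's, and the easy reductions you list are correct. But there is a genuine gap at the step you yourself flag as ``the hard part'', and the mechanism you propose to reach it does not help. The detour through the $C$-contractions $G_1=\Contra{G}{\overline{X}}{\overline{x}}$ and $G_2=\Contra{G}{X}{x}$ is unproductive: once a shore of $C$ is contracted, $C$ becomes the trivial cut $\partial(\overline{x})$ in $G_i$, so the induction hypothesis cannot be invoked relative to $C$ there; and when the ELP structure you extract from $G_i$ passes through the contracted vertex, your lifting lemma fails and you offer no repair. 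Your fallback---apply ELP to $G$ itself, uncross a crossing component, and ``iterate with an extremal choice of $|V(K)\cap X|$''---only manufactures tight cuts laminar with $C$, not ELP cuts, and nothing in the outline forces that iteration to terminate in an ELP structure.

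The paper's route differs at exactly this point. It never contracts along $C$. Having obtained from ELP a barrier or $2$-separation of $G$ meeting both shores, it uncrosses to produce a nontrivial tight cut $I$ whose shore lies inside a shore of $C$, and then contracts along a cut coming from that ELP structure (the barrier cut of a crossing component, or a carefully chosen $2$-separation cut); in the resulting smaller graph $H$ the cut $I$ remains nontrivial and tight, so induction applies to the pair $(H,I)$. Lifting the answer back to $G$ is done by three dedicated lemmas: Lemma~\ref{lem:inheritance} transfers barriers and $2$-separations across a $2$-separation contraction; Lemma~\ref{lem:barrier-sheltered} shows that for a barrier $B$ meeting both shores one of $B\cap X$, $B\cap\overline{X}$ is again a barrier of $G$---this is exactly the missing tool that converts a crossing barrier into a $C$-sheltered one; and Lemma~\ref{lem:t} packages the recursion while tracking a designated vertex $t\in\overline{X}$. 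The subsequent case analysis (no $2$-separation versus $2$-separation; within the latter, existence and minimality of a ``good'' component of $G-S$) is built on these lemmas, not on contracting $C$, and is where the content you anticipate actually lives.
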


  \begin{exa}
    \label{exa:example-main}
    Consider the graph  depicted in Figure~\ref{fig:example-main}. The
    tight cut $C$ is laminar with the cut $D$, which is a 2-separation
    cut associated  with the pair  $\{u_1,u_2\}$. The cut $C$  is also
    laminar with  the 2-separation cut  $F$, which is  associated with
    the   pair  $\{1,b_3\}$.    The  barriers   $\{b_1,b_2,b_3\}$  and
    $\{2,3\}$ are  $C$-sheltered, whereas  the barrier  $\{1,2,3\}$ is
    not $C$-avoiding.
  \end{exa}

  \begin{figure}[!ht]
    \centering
    \psfrag{C}{$C$}
    \psfrag{D}{$D$}
    \psfrag{F}{$F$}
    \psfrag{u1}{$u_1$}
    \psfrag{u2}{$u_2$}
    \psfrag{b1}{$b_1$}
    \psfrag{b2}{$b_2$}
    \psfrag{b3}{$b_3$}
    \psfrag{1}{$1$}
    \psfrag{2}{$2$}
    \psfrag{3}{$3$}
    \psfrag{4}{}
    \psfrag{5}{}
    \psfrag{6}{}
    \psfrag{7}{}
    \includegraphics [width = .65\textwidth] {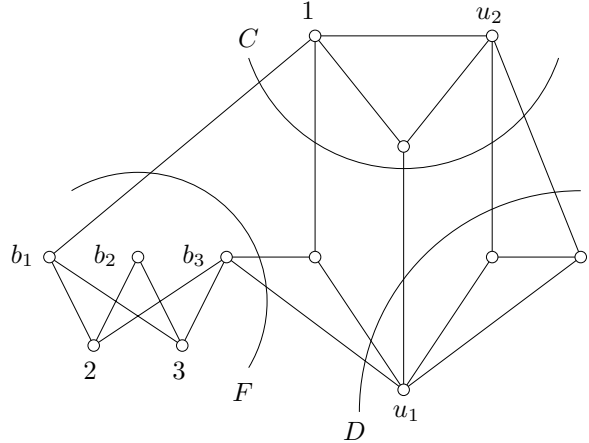}
    \caption{The  graph of Example~\ref{exa:example-main}}
    \label{fig:example-main}
  \end{figure}

  The proof of the Main Theorem will be given in Section \ref{sec:pf},
  after we present some necessary results in Section \ref{sec:pre}

  \section{Ingredients}
  \label{sec:pre}

  \noindent
  In this section we prove three Lemmas, which play a crucial role in
  the proof of the Main Theorem.

  \begin{lem}
    \label{lem:inheritance}
    ~Let $C:=\partial(X)$ be a 2-separation cut of a matching covered
    graph~$G$,  associated with  a  2-separation $\{u_1,u_2\}$,  where
    $u_1 \in X$ and $u_2 \in \overline{X}$.  Let $S_H$ denote either a
    2-separation    or    a    barrier    of    the    $C$-contraction
    $H:=\Contra{G}{\overline{X}}{\overline{x}}$ of $G$ and let
    \begin{displaymath}
      S:= \left \{
      \begin{array}{rl}
 	S_H,    &     \mbox{if    $\overline{x}     \notin    S_H$},\\
 	(S_H-\overline{x})+u_2, & \mbox{if $\overline{x} \in S_H$}.
      \end{array}
      \right .
    \end{displaymath}
    Then      the       following      properties       hold      (See
    Figure~\ref{fig:inheritance}):
    \begin{figure}[!ht]
      \centering
      \psfrag{C}{$C$}
      \psfrag{X}{$X-u_1$}
      \psfrag{Xb}{$\overline{X}-u_2$}
      \psfrag{u1}{$u_1$}
      \psfrag{u2}{$u_2$}
      \psfrag{xb}{$\overline{x}$}
      \psfrag{G}{{\small(a) $G$}}
      \psfrag{H}{{\small(b) $H$}}
      \includegraphics [width = \textwidth] {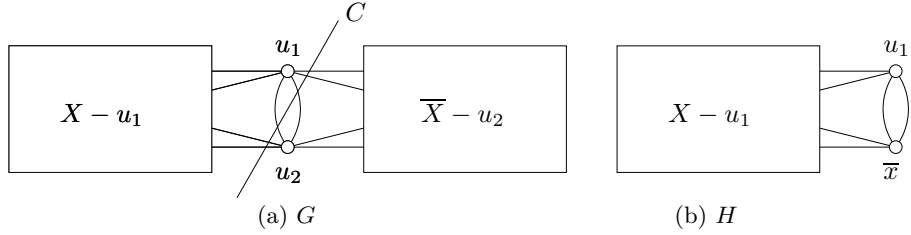}
      \caption{Illustration for Lemma~\ref{lem:inheritance}}
      \label{fig:inheritance}
    \end{figure}
    \begin{enumerate}\rmenum
    \item
      \label{itm:none-in-u1-xbar}
      every component of $H-S_H$ that does not contain vertices in
      $\{u_1,\overline{x}\}$ is a component of $G-S$,
    \item
      \label{itm:<2-components}
      at  most   one  component   of  $H-S_H$  contains   vertices  in
      $\{u_1,\overline{x}\}$, and
    \item
      \label{itm:inheritance}
      if $S_H$ is a barrier of $H$ then $S$ is a barrier of $G$ and if
      $S_H$ is  a 2-separation of  $H$ then  $S$ is a  2-separation of
      $G$.
    \end{enumerate}
  \end{lem}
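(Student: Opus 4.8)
The plan is to compare, component by component, the graph $H-S_H$ with the graph $G-S$. I would begin with a few structural facts that hold because $C=\partial(X)$ is a $2$-separation cut for $\{u_1,u_2\}$ in a matching covered graph. Every edge of $C$ is incident with $u_1$ or with $u_2$ (otherwise it would join two distinct components of $G-\{u_1,u_2\}$); since $|V(G)|\ge4$ the graph $G$ is $2$-connected by Proposition~\ref{pro:mc=>2-connected}, so every component of $G-\{u_1,u_2\}$ has both $u_1$ and $u_2$ as neighbours; and since $C$ is tight, $\overline X$ induces a connected subgraph of $G$ by Corollary~\ref{cor:shores-connected}. Hence $\overline X-u_2$ is a union of even components of $G-\{u_1,u_2\}$, so $|\overline X|$ is odd, $u_1$ has a neighbour in $\overline X-u_2$, $u_1\overline x\in E(H)$, and the only edges of $G$ from $\overline X-u_2$ to $X$ land on $u_1$. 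Part~\ref{itm:<2-components} follows at once: since $u_1\overline x\in E(H)$, whatever survives of $\{u_1,\overline x\}$ in $H-S_H$ lies in a single component.

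For parts~\ref{itm:none-in-u1-xbar} and~\ref{itm:inheritance} I would split on the definition of $S$. If $\overline x\notin S_H$, then $S=S_H\subseteq X$; since deleting $S$ (disjoint from $\overline X$) commutes with contracting $\overline X$, we get $H-S_H=(G-S)/(\overline X\to\overline x)$ with $\overline X$ still connected in $G-S$. Contracting a connected set yields a bijection between the components of $G-S$ and those of $H-S_H$: a component of $H-S_H$ avoiding $\overline x$ is literally a component of $G-S$ (it has no edge to $\overline x$ in $H$, hence none to $\overline X$ in $G$), and the unique component meeting $\overline x$ corresponds to the one meeting $\overline X$. As $|\overline X|$ is odd, this bijection preserves the parity of each component, and $|S|=|S_H|$, so $o(G-S)=o(H-S_H)$; part~\ref{itm:none-in-u1-xbar} is immediate, and part~\ref{itm:inheritance} follows because $o(G-S)=|S|$ makes $S$ a barrier, while for a $2$-separation of $H$ the bijection also preserves the number of components and keeps every component even.

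If $\overline x\in S_H$, then $S=(S_H-\overline x)+u_2$ and $|S|=|S_H|$ (as $u_2\notin S_H$). Should $u_1$ also lie in $S_H$, then $S_H$ is not a barrier ($u_1\overline x\in E(H)$ while barriers are independent), so $S_H$ is a $2$-separation and thus $S_H=\{u_1,\overline x\}$, giving $S=\{u_1,u_2\}$ and $G-S=G-\{u_1,u_2\}$, for which all three parts are clear. Otherwise $u_1\notin S_H$, so $u_1$ survives in $H-S_H$ and $G-S$ arises from $H-S_H$ by adjoining the vertex set $\overline X-u_2$, all edges inside it, and all edges from $\overline X-u_2$ to $u_1$ (the only ones from $\overline X-u_2$ to $X$). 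Since every component of $G-\{u_1,u_2\}$ meets $u_1$, all of $\overline X-u_2$ attaches to the component $K_0$ of $H-S_H$ through $u_1$, and nothing else is affected; as $|\overline X-u_2|$ is even this is again a parity-preserving bijection of components, with $K_0$ merely enlarged, and parts~\ref{itm:none-in-u1-xbar} and~\ref{itm:inheritance} follow as before.

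The step needing the most care is this last sub-case: one must verify both that $\overline X-u_2$ attaches \emph{only} to $u_1$, so that the components of $H-S_H$ other than $K_0$ are untouched, and that \emph{all} of $\overline X-u_2$ attaches, so that no new components appear — both resting on the $2$-connectivity fact that each component of $G-\{u_1,u_2\}$ sees both $u_1$ and $u_2$ — and then one must use that $|\overline X-u_2|$ is even to make sure a barrier of $H$ lifts to a genuine barrier of $G$ and not merely to a Tutte-tight set with even components buried in $\overline X$.
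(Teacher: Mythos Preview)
Your proof is correct and rests on the same ingredients as the paper's: the adjacency of $u_1$ and $\overline{x}$ in $H$ (from 2-connectivity), the fact that every edge of $C$ meets $\{u_1,u_2\}$, and a parity count. The only difference is organizational --- you case-split on whether $\overline{x}\in S_H$ and build an explicit parity-preserving bijection between the components of $H-S_H$ and those of $G-S$, whereas the paper proves part~\ref{itm:none-in-u1-xbar} uniformly (by checking that the coboundary of any component avoiding $\{u_1,\overline{x}\}$ lands in $S$) and then finishes part~\ref{itm:inheritance} with Corollary~\ref{cor:barrier} rather than a full bijection.
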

  \begin{proof}
    Let $\mathcal{H}$  denote the collection of  components of $H-S_H$
    and  let $\mathcal{H}_0$  denote the  collection of  components of
    $H-S_H$ that contain a least one vertex in $\{u_1,\overline{x}\}$.

    \bigskip
    \noindent
    \underline{\ref{itm:none-in-u1-xbar}}: Let  $H[Y]$ be  a component
    in    $\mathcal{H}   -    \mathcal{H}_0$.    By    definition   of
    $\mathcal{H}_0$,       $Y       \subseteq      X-u_1$,       hence
    $\partial_G(Y)=\partial_H(Y)$.  We shall now prove that every edge
    of $\partial(Y)$ is incident with a vertex of $S$.

    Let $e$ be  an edge of $\partial(Y)$. Then $e$  joins a vertex $u$
    in $Y$ to a  vertex $v$ in $S_H$. If $v  \ne \overline{x}$ then $v
    \in S$.  We may thus assume that $v = \overline{x}$, which implies
    that $e \in C$. Every edge of $C$ is incident in $G$ with a vertex
    in $\{u_1,u_2\}$.  Thus,  either~$u_1$ is an end of $e$  in $X$ or
    $u_2$ is  an end  of $e$  in $\overline{X}$. As  $Y \subseteq  X -
    u_1$, the end of  $e$ in $Y$ is not~$u_1$. It  follows that $e$ is
    not incident with $u_1$, hence  $e$ is incident with~$u_2$ in~$G$.
    In  both alternatives  we conclude  that  $e$ is  incident with  a
    vertex  of  $S$.  This  conclusion  holds  for  each edge  $e  \in
    \partial(Y)$.  We deduce that $H[Y]$ is a component of $G-S$.

    \bigskip
    \noindent
    \underline{\ref{itm:<2-components}}: The graph  $G$ is 2-connected
    (Proposition~\ref{pro:mc=>2-connected}), hence  the vertices $u_1$
    and $\overline{x}$ are adjacent in the graph $H$.

    \begin{sta}
      \label{sta:<2-components}
      $|\mathcal{H}_0| \le 1$, with equality  if $S_H$ is a barrier of
      $H$.
    \end{sta}
    \begin{proof}
      If neither $u_1$  nor $\overline{x}$ is in $S_H$  then, as $u_1$
      and    $\overline{x}$   are    adjacent,    it   follows    that
      $|\mathcal{H}_0|=1$.  Assume thus that at least one of $u_1$ and
      $\overline{x}$  is   in~$S_H$.   In  that  case,   the  asserted
      inequality  holds.  Moreover,  suppose that  $S_H$ is  a barrier
      then, by  Corolary~\ref{cor:barrier}, $S_H$ is  independent.  It
      follows    that   precisely    one    of    the   vertices    in
      $\{u_1,\overline{x}\}$ is in $S_H$, hence equality holds.
    \end{proof}

    \bigskip
    \noindent
    \underline{\ref{itm:inheritance}}: By~(\ref{itm:none-in-u1-xbar}),
    all the  components in $\mathcal{H}-\mathcal{H}_0$  are components
    of $G-S$.  By~(\ref{sta:<2-components}), $|\mathcal{H}_0|  \le 1$,
    with equality if $S_H$ is a barrier of $H$.

    Consider first the  case in which $S_H$ is a  barrier of $H$. Each
    of the $|S|-1$ (odd)  components in $\mathcal{H}-\mathcal{H}_0$ is
    a component  of $G-S$.  By  parity, $G-S$  has at least  $|S|$ odd
    components.  By Corollary~\ref{cor:barrier},  $G-S$ has  precisely
    $|S|$ components, all  of which are odd. Indeed, $S$  is a barrier
    of $G$.

    Finally,  suppose that  $S_H$ is  a 2-separation.   The collection
    $\mathcal{H}-\mathcal{H}_0$  contains a  graph, $K$,  which is  an
    even  component of  $H-S_H$. Thus,  $K$  is an  even component  of
    $G-S$. Moreover, $K$ is a  subgraph of $G[X-u_1]$, the subgraph of
    $G$ induced  by $X-u_1$, hence  $G-S$ has two or  more components.
    As $K$ is even and $G$  is matching covered, it follows that $G-S$
    has only  even components  (Corollary~\ref{cor:barrier}).  Indeed,
    $S$ is a 2-separation of $G$.
  \end{proof}

  Let $G$ be a matching covered graph, let $C:=\partial(X)$ be a tight
  cut of  $G$, let~$B$ denote a  barrier of $G$ and  let $\mathcal{H}$
  denote the set  of components of $G-B$.  For each  shore $Z$ of $C$,
  let  $\mathcal{H}_Z$  be   the  set  of  those   components  $H  \in
  \mathcal{H}$   such  that   $|V(H)  \cap   Z|$  is   odd.   Clearly,
  $|\mathcal{H}_X|  + |\mathcal{H}_{\overline{X}}|  = |\mathcal{H}|  =
  |B|$. See Figure~\ref{fig:c-sheltered}.

  \begin{figure}[!ht]
    \centering
    \psfrag{X}{$X$}
    \psfrag{Y}{$Y$}
    \psfrag{C}{$C$}
    \psfrag{o}{odd}
    \psfrag{e}{even}
    \psfrag{b1}{$b_1$}
    \psfrag{b2}{$b_2$}
    \psfrag{b3}{$b_3$}
    \includegraphics [width=.8\textwidth] {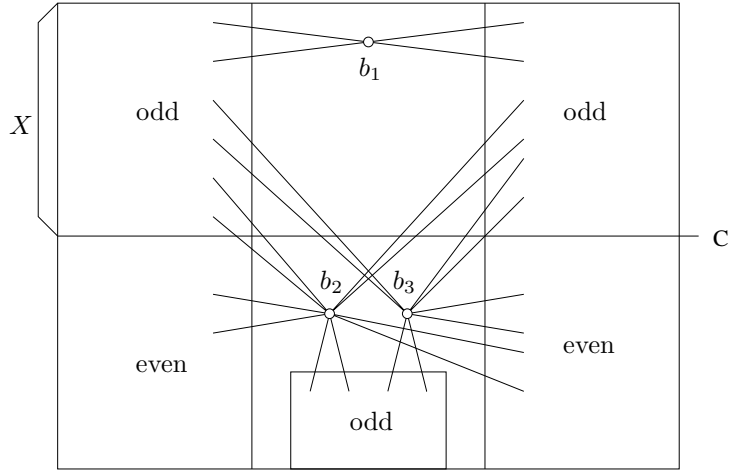}
    \caption{A barrier $B:=\{b_1,b_2,b_3\}$, where $|\mathcal{H}_X|=2$
      and $|\mathcal{H}_{\overline{X}}|=1$}
    \label{fig:c-sheltered}
  \end{figure}

  \begin{lem}
    \label{lem:barrier-sheltered}
    Let $G$  be a  matching covered graph,  let $C:=\partial(X)$  be a
    tight cut of $G$, let $B$ denote a barrier of $G$ and let $K$ be a
    component of $\mathcal{H}_X$ that contains  a vertex adjacent to a
    vertex in $B \cap \overline{X}$.  The following properties hold:
    \begin{enumerate}\rmenum
    \item
      \label{itm:barrier-accounting}
      $|B \cap X| = |\mathcal{H}_X| -  1$ and $|B \cap \overline{X}| =
      |\mathcal{H}_{\overline{X}}| + 1$,
    \item
      \label{itm:barrier-laminar}
      each component in $\mathcal{H}_{\overline{X}}$  is a subgraph of
      $G[\overline{X}]$ and has  no vertex adjacent to a  vertex of $B
      \cap X$,
    \item
      \label{itm:barrier-sheltered}
      $B  \cap \overline{X}$  is a  ($C$-sheltered, possibly  trivial)
      barrier of $G$, and
    \item
      \label{itm:barrier-sheltered-nontrivial}
      if $C$  is nontrivial  and $B$  is $C$-avoiding  and nontrivial,
      then  $B  \cap  \overline{X}$ is  a  ($C$-sheltered)  nontrivial
      barrier of $G$.
    \end{enumerate}
  \end{lem}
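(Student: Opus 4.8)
The plan is to study an arbitrary perfect matching $M$ of $G$ through the barrier $B$ together with the tight cut $C$, invoking the hypothesis on $K$ only to break a single symmetry. Here is the bookkeeping on which everything rests. Since $C$ is tight, every perfect matching meets it in exactly one edge, so $|X|$ and $|\overline{X}|$ are both odd; since $B$ is a barrier, every component of $G-B$ is odd (Corollary~\ref{cor:barrier}), so each $H\in\mathcal{H}$ belongs to exactly one of $\mathcal{H}_X$, $\mathcal{H}_{\overline{X}}$. Fix a perfect matching $M$. As $B$ is independent and $o(G-B)=|B|$, $M$ matches $B$ bijectively onto the components of $G-B$: write $b_H\in B$ for the vertex matched into $H$ and $w_H\in V(H)$ for its $M$-partner, so $w_H$ is the unique vertex of $V(H)$ matched by $M$ to a vertex outside $V(H)$. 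Let $e_0=uv$ with $u\in X$, $v\in\overline{X}$ be the unique edge of $M$ in $C$. If $V(H)$ does not contain both ends of $e_0$, then $M$ restricted to $V(H)$ uses no edge of $C$, and a parity count comparing $V(H)\cap X$ with $V(H)\cap\overline{X}$ shows that $w_H$ lies on the \emph{odd} side of $H$ (that is, $w_H\in X$ iff $H\in\mathcal{H}_X$) and that $b_H$ lies in the same shore of $C$ as $w_H$ unless $e_0=w_Hb_H$; whereas if $V(H)$ does contain both ends of $e_0$, the same count shows $w_H$ lies on the \emph{even} side of $H$. Following the single ``anomalous'' component through the bijection $H\mapsto b_H$ — the component containing $e_0$ internally, or the component whose $M$-edge to $B$ is $e_0$ — one reads off that $|B\cap X|=|\mathcal{H}_X|-1$ exactly when the end $u$ of $e_0$ in $X$ lies in $V(H)$ for some $H\in\mathcal{H}_X$, and $|B\cap X|=|\mathcal{H}_X|+1$ in the complementary case (when $u\in B\cap X$, or $u$ lies in a component of $\mathcal{H}_{\overline{X}}$). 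Since $|B\cap X|-|\mathcal{H}_X|$ does not depend on $M$, exactly one of these two situations occurs, for all perfect matchings at once.

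For (i), it remains to exclude $|B\cap X|=|\mathcal{H}_X|+1$. Let $k\in V(K)$ be a vertex adjacent to some $b\in B\cap\overline{X}$ and choose a perfect matching $M$ containing the edge $kb$ (possible as $G$ is matching covered). If $k\in X$, then $kb\in C$, so $e_0=kb$ and its end $u=k$ in $X$ lies in $V(K)$ with $K\in\mathcal{H}_X$. If $k\in\overline{X}$, then $w_K=k\in\overline{X}$ while $K\in\mathcal{H}_X$, which by the previous paragraph is possible only if $V(K)$ contains both ends of $e_0$; then again the end of $e_0$ in $X$ lies in $V(K)\in\mathcal{H}_X$. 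In either case we are in the first situation, so $|B\cap X|=|\mathcal{H}_X|-1$, and the companion identity $|B\cap\overline{X}|=|\mathcal{H}_{\overline{X}}|+1$ follows from $|B\cap X|+|B\cap\overline{X}|=|\mathcal{H}|=|\mathcal{H}_X|+|\mathcal{H}_{\overline{X}}|$.

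For (ii)--(iv), note that by (i) we are in the situation where, for \emph{every} perfect matching, the end in $\overline{X}$ of its unique $C$-edge lies in a component of $\mathcal{H}_X$ or in $B\cap\overline{X}$, and never in a component of $\mathcal{H}_{\overline{X}}$. Hence, for $H\in\mathcal{H}_{\overline{X}}$: if $V(H)$ met $X$ it would, being connected, contain an edge of $C$, and a perfect matching through that edge would contradict the last sentence, so $V(H)\subseteq\overline{X}$; and an edge of $G$ from $V(H)$ to $B\cap X$ would be a $C$-edge with its $\overline{X}$-end in $V(H)$, likewise impossible. This gives (ii). For (iii), set $B':=B\cap\overline{X}$; it is nonempty by (i) and trivially $C$-sheltered. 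By (ii), in $G-B'$ each $H\in\mathcal{H}_{\overline{X}}$ is still a component, whereas the remaining vertices form the set $(B\cap X)\cup\bigcup_{H\in\mathcal{H}_X}V(H)$, which equals $X\cup\bigcup_{H\in\mathcal{H}_X}(V(H)\cap\overline{X})$ and induces a connected subgraph — it contains the connected set $X$ (Corollary~\ref{cor:shores-connected}), and each further piece $V(H)\cap\overline{X}$ with $H\in\mathcal{H}_X$ is attached to $V(H)\cap X\subseteq X$ — whose order equals $|B\cap X|$ plus a sum of $|\mathcal{H}_X|$ odd integers, hence is odd because $|B\cap X|=|\mathcal{H}_X|-1$. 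Thus $G-B'$ has exactly $1+|\mathcal{H}_{\overline{X}}|=|B'|$ components, all of them odd, so $B'$ is a barrier of $G$. For (iv), it suffices by (i) to show $\mathcal{H}_{\overline{X}}\neq\emptyset$. If $\mathcal{H}_{\overline{X}}=\emptyset$, then $|B'|=1$, say $B'=\{b_0\}$, and every component of $G-B$ meets $X$; since $B$ is $C$-avoiding, each barrier cut $\partial(V(H))$ is laminar with $\partial(X)$, and since $|B|\ge 2$ there are at least two components, each meeting $X$, so neither $V(H)\subseteq\overline{X}$ nor $X\subseteq V(H)$ holds; hence $V(H)\subseteq X$ or $\overline{X}\subseteq V(H)$ for each $H$. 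The latter holds for at most one $H$ and forces $b_0\in V(H)$, impossible since $b_0\in B$; so $V(H)\subseteq X$ for every $H$, whence $\overline{X}=\{b_0\}$ and $C$ is trivial, contradicting the hypothesis of (iv).

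The crux is (i), and within it the subcase $k\in\overline{X}$: the whole point of the hypothesis on $K$ is to furnish a perfect matching whose unique $C$-edge is anchored in a component of $\mathcal{H}_X$, and settling that subcase relies on the parity analysis of how $M$ restricts to $V(K)$ rather than on any property of $C$ in isolation.
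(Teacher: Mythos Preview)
Your proof is correct and follows essentially the same approach as the paper's: both track, for a well-chosen perfect matching, where the unique $C$-edge lands relative to the components of $G-B$, and then read off the bijection between $B$ and $\mathcal{H}$ shore by shore. The differences are organizational only---the paper isolates the key parity observation as a separate sub-statement, proves (ii) by re-applying (i) with the roles of $X$ and $\overline{X}$ swapped rather than via your ``for every perfect matching'' characterization, and for (iii) appeals to parity together with Corollary~\ref{cor:barrier} instead of explicitly verifying that the large component of $G-(B\cap\overline{X})$ is connected.
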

  \begin{proof}
    The following  simple statement is  important in the proof  of the
    Lemma.

    \begin{sta}
      \label{sta:edge-in-the-cut}
      Let $Z$ be a shore of $C$, let $H \in \mathcal{H}_Z$, let $e$ be
      an edge  in $\partial(V(H))$ incident  with a vertex of  $B \cap
      \overline{Z}$  and let  $M$ be  a perfect  matching of  $G$ that
      contains edge  $e$.  Then, the edge  of $M \cap C$  has at least
      one end in $H$.
    \end{sta}
    \begin{proof}  Let  $e:=vw$,  $v  \in  V(H)$,  $w  \in  B  \cap
      \overline{Z}$. If $v  \in Z$ then clearly $e$ is  the edge of $M
      \cap C$.   Assume thus that  $v \in \overline{Z}$.   As $W:=V(H)
      \cap \overline{Z}$ is even, it follows that $M \cap \partial(W)$
      has another edge, $f$.  The edge  $f$ cannot have an end in $B$,
      because~$H$ is matched by $M$ to  the end $w$ of $e$.  Thus, $f$
      has an end in $V(H) \cap Z$,  hence $f$ has both ends in $H$ and
      is the edge of $M \cap C$.
    \end{proof}

    \bigskip
    \noindent
    \underline{\ref{itm:barrier-accounting}}: By hypothesis, $K$ has a
    vertex,  $v$, which  is  adjacent to  a vertex,  $w$,  of $B  \cap
    \overline{X}$. Let $M$ be a  perfect matching of $G$ that contains
    edge $vw$. By~(\ref{sta:edge-in-the-cut}), the edge of $M \cap C$,
    say, $f$,  has at least  one end in  $K$. Clearly, either  $f$ has
    both  ends in  $K$ or  it is  incident with  a vertex  of $B$.  It
    follows  that  except  for  $K$,   all  the  other  components  of
    $\mathcal{H}_X$ are matched  by $M$ to vertices of $B  \cap X$ and
    every component of $\mathcal{H}_{\overline{X}}$  is matched by $M$
    to a vertex of $B\cap \overline{X}$.  The asserted equality holds.

    \bigskip
    \noindent
    \ref{itm:barrier-laminar}:   Assume,   to   the   contrary,   that
    $\mathcal{H}_{\overline{X}}$  has  a  component  that  contains  a
    vertex     adjacent     to     a     vertex     in     $B     \cap
    X$. From~\ref{itm:barrier-accounting},  with the roles of  $X$ and
    $\overline{X}$ interchanged, we deduce that
    \begin{displaymath}
      |B \cap  \overline{X}| = |\mathcal{H}_{\overline{X}}| -  1 \quad
      \mbox{and} \quad |B \cap {X}| = |\mathcal{H}_{{X}}| +1.
    \end{displaymath}
    This           is           a           contradiction           to
    property~\ref{itm:barrier-accounting}. Let  $H$ be a  component in
    $\mathcal{H}_{\overline{X}}$ and assume, to the contrary, that $H$
    has vertices  in $X$.   As $H$  has an odd  number of  vertices in
    $\overline{X}$, it  has an even  number of vertices in  $X$, hence
    $V(H) \cap X$  is a proper subset of $X$.   The subgraph $G[X]$ of
    $G$  is  connected  (Corollary~\ref{cor:shores-connected}),  hence
    some vertex of  $V(H) \cap X$ is  adjacent to a vertex  of $B \cap
    X$, a contradiction.

    \bigskip
    \noindent
    \ref{itm:barrier-sheltered}:   Let   $H$   be   a   component   in
    $\mathcal{H}_{\overline{X}}$.   From~\ref{itm:barrier-laminar}, we
    deduce  that $H$  is a  subgraph of  $G[\overline{X}]$.  Moreover,
    every edge  of $\partial(V(H))$  is incident with  a vertex  of $B
    \cap \overline{X}$.  We conclude that  $H$ is a component of $G-(B
    \cap  \overline{X})$.   This  conclusion  holds for  each  $H  \in
    \mathcal{H}_{\overline{X}}$.    From~\ref{itm:barrier-accounting},
    we  infer that  the  $|B  \cap \overline{X}|  -  1$ components  in
    $\mathcal{H}_{\overline{X}}$ are (odd) components  of $G - (B \cap
    \overline{X})$.   By  parity and  Corollary~\ref{cor:barrier},  $B
    \cap \overline{X}$ is a barrier of $G$.

    \bigskip
    \noindent
    \ref{itm:barrier-sheltered-nontrivial}:
    From~\ref{itm:barrier-sheltered},   we   infer    that   $B   \cap
    \overline{X}$ is  a ($C$-sheltered)  barrier of $G$.  Suppose that
    $C$ is nontrivial and $B$ is $C$-avoiding and nontrivial.  Assume,
    to the  contrary, that $B  \cap \overline{X}$ is trivial,  let $v$
    denote    the   only    vertex   of    $B   \cap    \overline{X}$.
    By~\ref{itm:barrier-accounting}, $\mathcal{H}=\mathcal{H}_X$.

    By hypothesis, $C$  is nontrivial, thus $\overline{X} -  v$ is not
    empty.   Consequently,  some  component of  $G-B$,  $H$,  contains
    vertices     of    $\overline{X}     -    v$.      Moreover,    as
    $\mathcal{H}=\mathcal{H}_X$, the  component $H$  contains vertices
    in $X$. In sum, $V(H)$ contains vertices in both shores of $C$.

    By hypothesis, $B$ is nontrivial. As $v$ is the only vertex of $B$
    in $\overline{X}$, it follows that  $B$ has vertices in $X$. Thus,
    $V(H)$  is  not   a  superset  of  $X$.   As  $v$,   a  vertex  of
    $\overline{X}$,  is  in $B$,  it  follows  that  $V(H)$ is  not  a
    superset of $\overline{X}$. In sum, $V(H)$ is neither a subset nor
    a superset  of any shore  of $C$.  Consequently, the cuts  $C$ and
    $\partial(V(H))$ cross, in a  contradiction to the hypothesis that
    $B$ is  $C$-avoiding.  We conclude  that $B \cap  \overline{X}$ is
    nontrivial.
  \end{proof}

  \begin{lem}
    \label{lem:t}
    Let  $C:=\partial(X)$ be  a  nontrivial tight  cut  of a  matching
    covered   graph    $G$   and    let   $t$    be   a    vertex   of
    $\overline{X}$.  Suppose that  the assertion  of the  Main Theorem
    holds for every graph having  $|V(G)|$ or fewer vertices. Then one
    of the following properties holds:
    \begin{enumerate}\rmenum
    \item
      \label{itm:s-no-t}
      either the  graph $G$ has  a 2-separation that does  not contain
      the vertex $t$, or
    \item
      \label{itm:s-t}
      $G$  has a  2-separation,  $S$, that  contains  the vertex  $t$,
      associated with  a cut $D:=\partial(Y)$, such  that $Y \subseteq
      \overline{X}$, or
    \item
      \label{itm:c-sheltered}
      the graph $G$ has a $C$-sheltered nontrivial barrier.
    \end{enumerate}
  \end{lem}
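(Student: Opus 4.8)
The plan is to analyze the $C$-contraction $H := \Contra{G}{\overline{X}}{\overline{x}}$, which is matching covered (Proposition~\ref{pro:tight-contraction}) and has fewer vertices than $G$ (since $C$ is nontrivial, $|\overline{X}| \ge 2$), so the Main Theorem applies to $H$. Inside $H$ I would apply the Main Theorem to a suitable nontrivial tight cut whose existence will be arranged below, obtaining either a $C'$-sheltered nontrivial barrier $S_H$ of $H$, or a $2$-separation cut of $H$ laminar with that cut; then I would lift $S_H$ back to $G$ using Lemma~\ref{lem:inheritance}, which tells us that a barrier (resp.\ $2$-separation) of $H$ yields a barrier (resp.\ $2$-separation) of $G$ via the substitution of $u_2$ for $\overline{x}$. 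The three alternatives in the conclusion should correspond, roughly, to: the lifted set avoids the ``merged'' vertex $u_2$ and is a $2$-separation of $G$ not containing $t$ (case~\ref{itm:s-no-t}); the lifted set is a $2$-separation whose associated cut has a shore living entirely in $\overline{X}$, with $t$ playing the role of the distinguished vertex (case~\ref{itm:s-t}); or the lifted set is a nontrivial barrier that, after intersecting appropriately with $\overline{X}$ via Lemma~\ref{lem:barrier-sheltered}, becomes $C$-sheltered (case~\ref{itm:c-sheltered}).

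More concretely, I would first handle the degenerate possibilities. If $G$ itself is a brick or brace, then since $C$ is a nontrivial tight cut this cannot happen, so $G$ has a nontrivial tight cut to begin with — in fact $C$ is one. I would choose, inside $H$, a tight cut to feed into the Main Theorem as follows: consider the vertex $\overline{x}$ of $H$ (the contracted image of $\overline{X}$) together with the distinguished vertex $t$. Here $t \in \overline{X}$, but in $H$ the vertex $t$ has been absorbed into $\overline{x}$, so I instead want to work in the \emph{other} contraction or to re-expand carefully; the cleaner route is to apply the inductive hypothesis of the Main Theorem to $H$ directly with its own nontrivial tight cut $C$ (if $H$ has one), and if $H$ has none then $H$ is a brick or brace and we argue separately. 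When $H$ has a nontrivial tight cut, the Main Theorem gives a $2$-separation cut $D_H = \partial(Y_H)$ laminar with it, or a sheltered nontrivial barrier; I then transfer $Y_H$ to $G$ by replacing $\overline{x}$ by $u_2$ if $\overline{x} \in Y_H$, and check using Lemma~\ref{lem:inheritance}\ref{itm:inheritance} that the result is a $2$-separation of $G$, and that its associated cut is either contained in $\overline{X}$ (giving case~\ref{itm:s-t} with $t$ taking the role of $u_2$, possibly after swapping the two vertices of the separation) or avoids $t$ altogether (case~\ref{itm:s-no-t}).

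The barrier case is where Lemma~\ref{lem:barrier-sheltered} does the heavy lifting: a barrier $S_H$ of $H$ lifts to a barrier $S$ of $G$ (Lemma~\ref{lem:inheritance}\ref{itm:inheritance}), and if $S$ meets both shores of $C$ nontrivially, then Lemma~\ref{lem:barrier-sheltered}\ref{itm:barrier-sheltered}–\ref{itm:barrier-sheltered-nontrivial} shows that $S \cap \overline{X}$ (or $S \cap X$, after choosing the shore containing the component adjacent across the cut) is a $C$-sheltered barrier, nontrivial provided $C$ and $S$ are both nontrivial and $S$ is $C$-avoiding; the $C$-avoiding hypothesis is guaranteed because the barrier obtained inside $H$ is sheltered there, hence avoiding there, and this property is inherited through the contraction. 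The main obstacle I anticipate is the bookkeeping around the vertex $t$ and the vertex $\overline{x}$: ensuring that whenever the transferred $2$-separation does contain the merged vertex, its associated cut can be taken with a shore inside $\overline{X}$, and that otherwise $t$ is genuinely excluded. Making the case split clean — in particular deciding, from the position of $\overline{x}$ relative to $Y_H$ and from which component of $H - S_H$ contains $u_1$ and $\overline{x}$ (controlled by Lemma~\ref{lem:inheritance}\ref{itm:<2-components}), which of the three conclusions we land in — is the delicate part; everything else is an application of the two inheritance lemmas plus parity via Corollary~\ref{cor:barrier}.
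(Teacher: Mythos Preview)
There is a genuine gap in your approach. You propose to work in the $C$-contraction $H = \Contra{G}{\overline{X}}{\overline{x}}$, but two things break immediately. First, Lemma~\ref{lem:inheritance} applies only when the cut being contracted is a \emph{2-separation cut}; the tight cut $C$ in the present lemma is an arbitrary nontrivial tight cut, so you have no mechanism to lift barriers or 2-separations of $H$ back to $G$ (your references to ``the substitution of $u_2$ for $\overline{x}$'' presuppose a $\{u_1,u_2\}$ that does not exist here). Second, in $H$ the cut $C$ becomes the trivial cut $\partial(\{\overline{x}\})$, and the vertex $t$ has been absorbed into $\overline{x}$; there is no nontrivial tight cut of $H$ canonically related to $C$, and the distinction between alternatives~\ref{itm:s-no-t} and~\ref{itm:s-t} --- which is entirely about the position of $t$ --- cannot be read off from $H$. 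Your fallback ``if $H$ has none then $H$ is a brick or brace and we argue separately'' does not lead anywhere: $H$ being a brick gives no information about 2-separations of $G$ or about $t$.

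The paper's argument runs differently. The hypothesis says the Main Theorem holds for graphs with $|V(G)|$ or fewer vertices, so in particular it holds for $G$ \emph{itself}. Applying it to $(G,C)$ gives either a $C$-sheltered nontrivial barrier (alternative~\ref{itm:c-sheltered}) or a 2-separation $T$ with an associated cut $F=\partial(Z)$ whose shore $Z$ lies in a shore of $C$. If $t\notin T$ we get~\ref{itm:s-no-t}; if $t\in T$ and $Z\subseteq\overline{X}$ we get~\ref{itm:s-t}. The only hard case is $t\in T$ and $Z\subset X$. Now one contracts along the companion 2-separation cut $F'=\partial((Z-u)+t)$ --- which \emph{is} a 2-separation cut, so Lemma~\ref{lem:inheritance} is available --- producing a strictly smaller graph $H$ that carries a nontrivial tight cut $C_H:=\partial((X-Z)+u)$ (tight by Proposition~\ref{pro:I-U}), and one applies the induction hypothesis of the \emph{present lemma} to $(H,C_H,z')$. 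The three outcomes in $H$ then lift via Lemma~\ref{lem:inheritance} to the three alternatives in $G$. The idea you are missing is that the contraction must be along a 2-separation cut so that the inheritance lemma applies, and such a cut has to be produced first by invoking the Main Theorem on $G$ itself.
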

  \begin{proof}
    By induction on $|V(G)|$. By hypothesis, the assertion of the Main
    Theorem  holds for  $G$.  If  $G$ has  a $C$-sheltered  nontrivial
    barrier then alternative~\ref{itm:c-sheltered} of the statement of
    the Lemma  holds. We may thus  assume that $G$ has  a 2-separation
    $T$, and  an associated  cut $F:=\partial(Z)$ such  that $Z$  is a
    subset  of   a  shore  of   $C$.   If   $t  \notin  T$   then  the
    alternative~\ref{itm:s-no-t} of  the assertion holds. We  may thus
    assume that  $t \in  T$.  If $Z  \subseteq \overline{X}$  then the
    alternative~\ref{itm:s-t} holds,  with $Y:=Z$  and $D:=F$.  It now
    remains the case in which $Z \subset X$ and $t \in T$.

    Let $u$ be the vertex of $T-t$.  One of $u$ and $t$ is in $Z$.
    As $t \notin X$, it follows  that $u \in Z$.  Let $Z':=(Z-u) +
    t$.   The  cuts  $F$  and  $F':=\partial(Z')$  are  members  of  a
    2-separation cut  pair of  $G$.  Let  $H$ be  the $F'$-contraction
    $\Contra{G}{Z'}{z'}$  of $G$,  let  $X_H:=(X -  {Z})+u$ and  let
    $C_H:=\partial(X_H)$.  See Figure~\ref{fig:ch-in-both}.

    \begin{figure}[!ht]
      \centering
      \psfrag{X}{$X$}
      \psfrag{Y}{$Z$}
      \psfrag{D}{$F$}
      \psfrag{Yp}{$Z'$}
      \psfrag{Dp}{$F'$}
      \psfrag{Ylv1}{$Z-u$}
      \psfrag{XlY}{$X-Z$}
      \psfrag{Xblt}{$\overline{X}-t$}
      \psfrag{XH}{$X_H$}
      \psfrag{CH}{$C_H$}
      \psfrag{v1}{$u$}
      \psfrag{C}{$C$}
      \psfrag{t}{$t$}
      \newcommand{\h}{150pt}
      \subfigure [$G$] {
 	\includegraphics [height = \h] {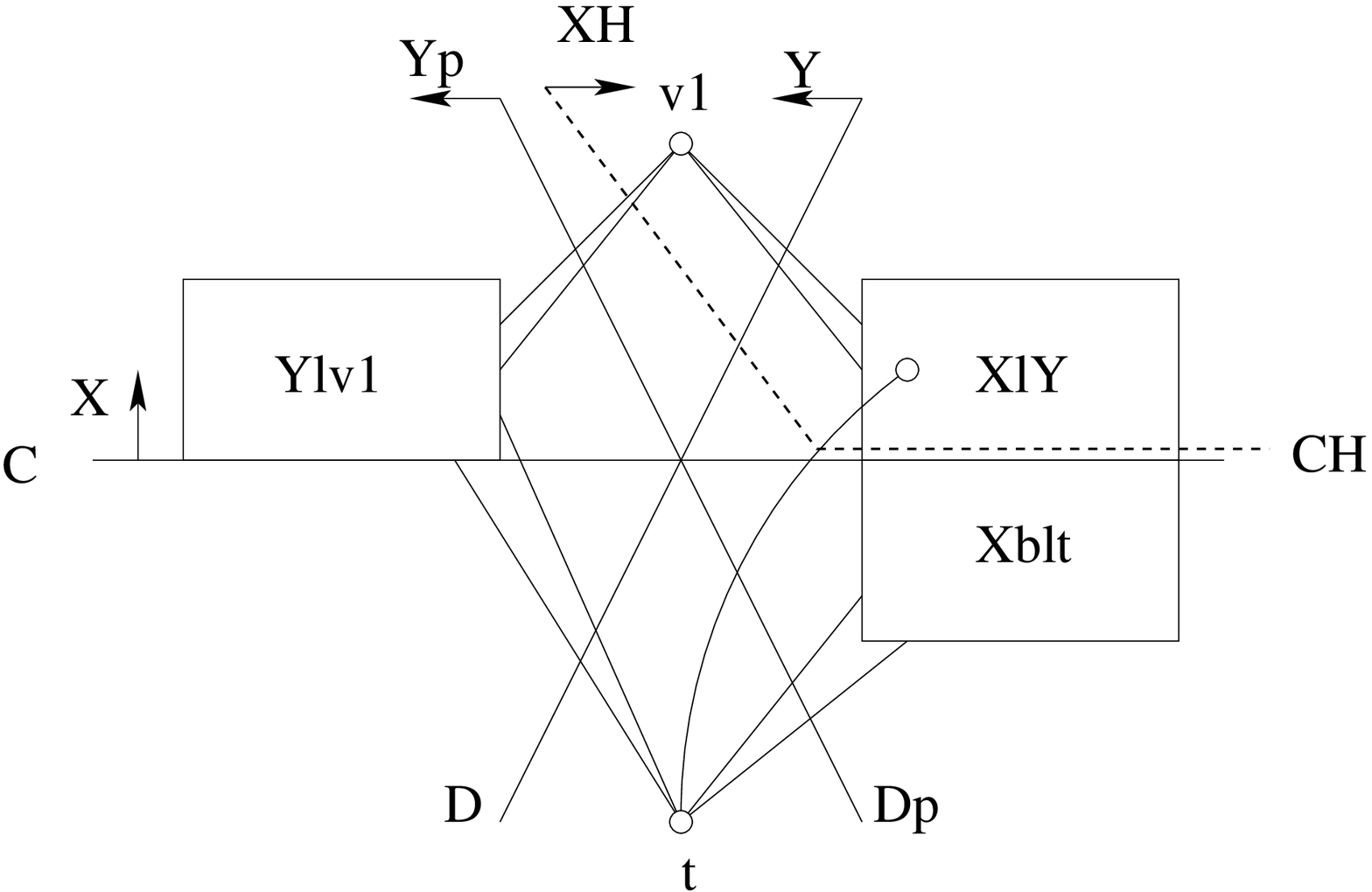}
 	\label{fig:ch-in-g}
      }
      \psfrag{C}{~}
      \psfrag{t}{$z'$}
      \subfigure [$H$] {
 	\includegraphics [height = \h] {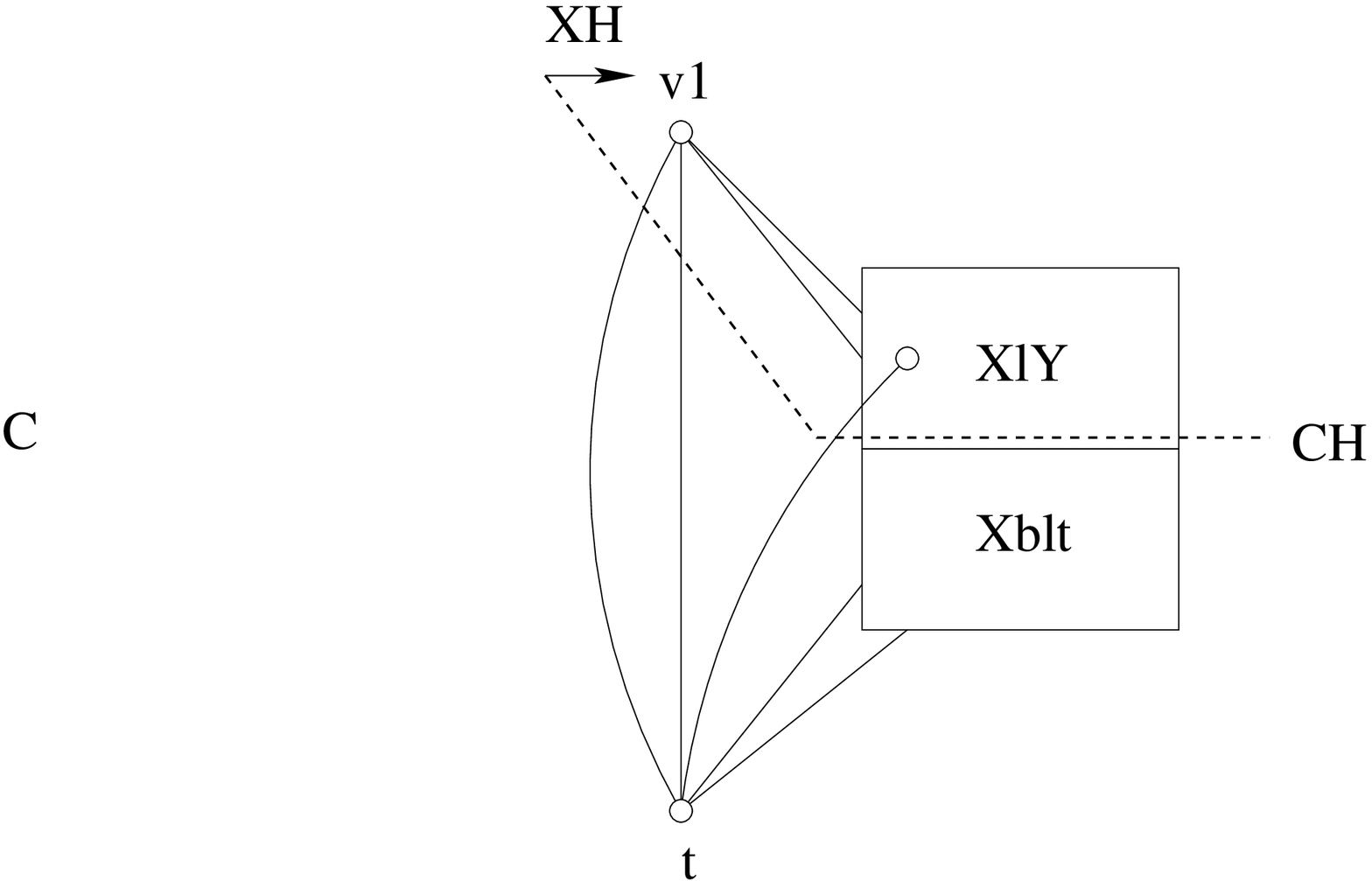}
 	\label{fig:ch-in-h}
      }
      \caption{The cut $C_H$ in $G$ and in $H$}
      \label{fig:ch-in-both}
    \end{figure}

    We plan  now to apply  the induction hypothesis, with  $H$, $X_H$,
    $C_H$ and $z'$ playing respectively the roles of $G$, $X$, $C$ and
    $t$.  The cuts $C$ and $F'$  cross, the intersection of the shores
    $X$ and  $\overline{Z'}$ is odd  and equal  to the shore  $X_H$ of
    $C_H$.      The     cut     $C_H$      is     tight     in     $G$
    (Proposition~\ref{pro:I-U}). As $C_H$ and  $F'$ are laminar, $C_H$
    is   tight   in   $H$   (Proposition~\ref{pro:tight-contraction}).
    Moreover, as $Z \subset X$, it  follows that $C_H$ is a nontrivial
    tight  cut  of $H$.   As  $|V(H)|  <  |V(G)|$,  we may  infer,  by
    hypothesis, that the assertion of the Main Theorem holds for every
    graph  having  $|V(H)|$  or  fewer vertices.   Moreover,  $z'  \in
    \overline{X_H}$.  We  now apply  the induction hypothesis  to $H$,
    $X_H$, $C_H$ and $z'$ playing  respectively the roles of $G$, $X$,
    $C$   and  $t$.   We  consider   separately  the   three  possible
    alternatives.

    \setcounter{cas}{0}

    \begin{cas}
      The  graph $H$  has a  2-separation which  does not  contain the
      vertex $z'$.
    \end{cas}
    Let $S$ be a 2-separation of  $H$ that does not contain the vertex
    $z'$.  By Lemma~\ref{lem:inheritance},  $S$ is  a 2-separation  of
    $G$.  Clearly,  $S$   does  not  contain  the   vertex  $t$.   The
    alternative~\ref{itm:s-no-t} of the assertion holds.

    \begin{cas}
      The graph $H$ has a  2-separation $S_H$ that contains the vertex
      $z'$, associated  with a 2-separation  cut $D_H:=\partial(Y_H)$,
      such that $Y_H \subseteq \overline{X_H}$.
    \end{cas}
    Let $W_H:=Y_H-S_H$. As $z' \in S_H$, the
    vertex $z'$ is not in  $W_H$. As $Y_H \subseteq \overline{X}$, the
    vertex $u$  is not  in $W_H$.   In sum,  $W_H$ and  $\{u,z'\}$ are
    disjoint. Let $S:=(S_H-z')+t$ and let $Y:=W_H+t = (Y_H - S_H) + t$.
    By Lemma~\ref{lem:inheritance}, every (even) component of $H[W_H]$
    is  a  component of  $G-S$  and  $S$  is  a 2-separation  of  $G$.
    Moreover, the vertex $t$ is in  $S$, the cut $D:=\partial(Y)$ is a
    2-separation cut of $G$ associated with $S$ and its shore $Y$ is a
    subset  of $\overline{X}$.   The alternative~\ref{itm:s-t}  of the
    assertion holds.

    \begin{cas}
      The graph $H$ has a $C_H$-sheltered nontrivial barrier.
    \end{cas}
    Let $S_H$ be a $C_H$-sheltered  nontrivial barrier of $H$.  We now
    apply Lemma~\ref{lem:inheritance}.   Let $S$ be as  defined in the
    statement of  that Lemma. Then,  $S$ is a (nontrivial)  barrier of
    $G$. If  $S_H$ is  a subset  of $X_H$  then $S=S_H$  and $S$  is a
    subset of  $X$. If $S_H$  is a subset of  $\overline{X_H}-z'$ then
    $S=S_H$ and $S$ is a  subset of $\overline{X}$.  Finally, if $S_H$
    is a subset  of $\overline{X}$ and $z' \in S_H$  then $S$ is equal
    to  $(S_H-z')+t$  and  is  a subset  of  $\overline{X}$.   In  all
    alternatives, $S$  is a  $C$-sheltered nontrivial barrier  of $G$,
    hence  alternative~\ref{itm:c-sheltered} of  the assertion  of the
    Lemma holds.

    \medskip
    The proof of the Lemma is complete.
  \end{proof}

  \section{Proof of the Main Theorem}
  \label{sec:pf}

  \begin{proof}
    Let $G$ be a matching covered  graph and let $C:=\partial(X)$ be a
    nontrivial  tight cut  of  $G:=(V,E)$.  Assume,  as the  induction
    hypothesis, that  the assertion  holds for every  matching covered
    graph having  fewer than $|V|$  vertices. We shall now  prove that
    either $G$ has a 2-separation cut  having a shore that is a subset
    of  a  shore  of  $C$   or  $G$  has  a  $C$-sheltered  nontrivial
    barrier.

    As $C$  is nontrivial  and tight,  we infer  from the  ELP Theorem
    (Theorem~\ref{thm:elp}) that  $G$ either  has a 2-separation  or a
    nontrivial barrier. We consider these possibilities separately.

    \setcounter{cas}{0}

    \begin{cas}
      \label{cas:no-2-sep}
      The graph $G$ does not have 2-separations.
    \end{cas}
    In this case,  the graph $G$ has a nontrival  barrier, $B$. If $B$
    is               $C$-avoiding               then,               by
    Lemma~\ref{lem:barrier-sheltered}\ref{itm:barrier-sheltered-nontrivial},
    one  of $B  \cap  X$ and  $B \cap  \overline{X}$  is a  nontrivial
    ($C$-sheltered) barrier of $G$.  We may thus assume that $G-B$ has
    a component, $G[Y]$,  such that $\partial(Y)$ and  $C$ cross.  Let
    $H:=\Contra{G}{\overline{Y}}{\overline{y}}$         and        let
    $D:=\partial(Y)$.   Adjust  notation,  by interchanging  $X$  with
    $\overline{X}$ if necessary, so that $|X \cap Y|$ is odd.

    Let $I:=\partial(X \cap Y)$,  let $U := \partial(\overline{X} \cap
    \overline{Y})$.   The cuts  $I$ and  $U$  are both  tight in  $G$.
    Moreover, no edge  of $G$ joins a vertex of  $\overline{X} \cap Y$
    to      a      vertex      in      $X      \cap      \overline{Y}$
    (Proposition~\ref{pro:I-U}). As  $|X \cap Y|$  is odd, $H$  has an
    odd number of  vertices in $X$. Moreover, $Y$ is  neither a subset
    of  $X$ nor  a superset  of  $\overline{X}$. The  subgraph of  $G$
    induced by  $\overline{X}$ is  connected.  Thus,  $G$ has  an edge
    joining a vertex  of $\overline{X} \cap Y$ to a  vertex in $B \cap
    \overline{X}$. In  sum, $H$ has an  odd number of vertices  in $X$
    and has  a vertex adjacent to  a vertex in $B  \cap \overline{X}$.
    By   Lemma~\ref{lem:barrier-sheltered}\ref{itm:barrier-sheltered},
    $B  \cap  \overline{X}$  is  a  (possibly  trivial)  $C$-sheltered
    barrier of $G$.  If $B \cap  \overline{X}$ is not a singleton then
    the assertion of  the Theorem holds.  We may thus  assume that $|B
    \cap \overline{X}|  = 1$.  Let $u$  be the only vertex  of $B \cap
    \overline{X}$.                                                  By
    Lemma~\ref{lem:barrier-sheltered}\ref{itm:barrier-accounting},
    every component of $G-B$ has an odd number of vertices in $X$.

    \begin{pro}
      The cut $I$ is nontrivial.
    \end{pro}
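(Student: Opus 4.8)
The plan is to argue by contradiction, exploiting the hypothesis of Case~\ref{cas:no-2-sep} that $G$ has no $2$-separation. First I would dispose of the complementary shore: since $C$ and $D$ cross, the sets $\overline{X}\cap Y$, $X\cap\overline{Y}$ and $\overline{X}\cap\overline{Y}$ are nonempty and pairwise disjoint, so $V\setminus(X\cap Y)=\overline{X}\cup\overline{Y}$ has at least three vertices; hence $I=\partial(X\cap Y)$ can be trivial only if $|X\cap Y|=1$. So suppose $X\cap Y=\{v\}$ and set $R:=\overline{X}\cap Y=Y-v$. As $Y$ is a component of $G-B$ it is odd, so $|R|=|Y|-1$ is even; moreover $R\neq\emptyset$ because $C$ and $D$ cross.

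The crucial step is to show that $\{v,u\}$ separates $G$. Every edge of $G$ incident with a vertex of $R$ has its other end inside the component $Y$ of $G-B$ or inside $B$. In the first case that end lies in $R\cup\{v\}$, since $Y=\{v\}\cup R$. In the second case it cannot lie in $B\cap X$: indeed $v\notin B$, so $B\cap X\subseteq X\cap\overline{Y}$, and by Proposition~\ref{pro:I-U} there is no edge joining $X\cap\overline{Y}$ to $\overline{X}\cap Y=R$; as $B\cap\overline{X}=\{u\}$, that end must be $u$. Therefore $N_G(R)\subseteq R\cup\{v,u\}$, so no edge of $G-\{v,u\}$ joins $R$ to its complement $V\setminus(R\cup\{v,u\})$, and the latter is nonempty because it contains $X\cap\overline{Y}$. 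Thus $G-\{v,u\}$ is disconnected, and each of its components is contained either in $R$ or in $V\setminus(R\cup\{v,u\})$, with at least one component on each side.

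It remains to control parities. By Corollary~\ref{cor:barrier}, $o(G-\{v,u\})\le 2$. If equality held, then $G-\{v,u\}$ would have no even component and exactly two components, one contained in $R$ and one in $V\setminus(R\cup\{v,u\})$; in particular $R$ would induce a single odd component, contradicting that $|R|$ is even. Hence $o(G-\{v,u\})\le 1$; and since $|V|$ is even, the number of odd components of $G-\{v,u\}$ has the same parity as $|V|$, hence is even, so it is $0$. Thus every component of $G-\{v,u\}$ is even, i.e., $\{v,u\}$ is a $2$-separation of $G$, contradicting the hypothesis of Case~\ref{cas:no-2-sep}. This contradiction gives $|X\cap Y|\ge 2$, so $I$ is nontrivial. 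The only delicate point is the separation step, which hinges on combining Proposition~\ref{pro:I-U} with the fact that $B$ meets $\overline{X}$ in the single vertex $u$; once $\{v,u\}$ is known to be a $2$-cut, the parity bookkeeping is routine.
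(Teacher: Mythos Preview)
Your proof is correct and follows essentially the same route as the paper's: assume $X\cap Y=\{v\}$, use Proposition~\ref{pro:I-U} together with $B\cap\overline{X}=\{u\}$ to show that every edge of $\partial(\overline{X}\cap Y)$ is incident with $\{u,v\}$, and conclude that $\{u,v\}$ is a $2$-separation, contradicting the hypothesis of Case~\ref{cas:no-2-sep}. The paper states the conclusion that $\{u,v\}$ is a $2$-separation rather tersely, whereas you spell out the parity bookkeeping (via Corollary~\ref{cor:barrier} and the evenness of $|R|$) needed to rule out odd components of $G-\{u,v\}$; this extra care is warranted, since $G[\overline{X}\cap Y]$ is not a priori connected.
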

    \begin{proof}
      Assume,  to  the contrary,  that  $X  \cap  Y$ is  a  singleton,
      $\{v\}$.  No edge of $G$ joins a vertex of $X \cap \overline{Y}$
      to  a vertex  of $\overline{X}  \cap  Y$.  Thus,  every edge  of
      $\partial(\overline{X}  \cap Y)$  is incident  with a  vertex in
      $\{u,v\}$.  We conclude that $\{u,v\}$ is a 2-separation of $G$,
      in  contradiction  to  the  hypothesis  that  $G$  is  free  of
      2-separations. (In fact, the cut $\partial((\overline{X} \cap Y)
      + u)$ is a 2-separation cut of $G$ associated with $\{u,v\}$.)
    \end{proof}

    We  now apply  Lemma~\ref{lem:t}, with  $H$, $X  \cap Y$,  $I$ and
    $\overline{y}$ playing,  respectively, the roles of  $G$, $X$, $C$
    and $t$. We then deduce that one of the following possibilities
    holds:
    \begin{enumerate}\rmenum
    \item
      either the  graph $H$ has  a 2-separation that does  not contain
      the vertex $\overline{y}$, or
    \item
      $H$  has  a  2-separation,   $S_H$,  that  contains  the  vertex
      $\overline{y}$, associated with a cut $D_H:=\partial(Y_H)$, such
      that $Y_H \subseteq (\overline{X} \cap Y) + \overline{y}$, or
    \item
      the graph $H$ has a nontrivial $I$-sheltered barrier, $B_H$.
    \end{enumerate}

    We shall  now eliminate the  two first possibilities.   Assume, to
    the contrary,  that $H$ has  a 2-separation, $S_1$, that  does not
    contain the vertex $\overline{y}$. One of the (even) components of
    $H-S_1$, $K_1$, does not contain  the vertex $\overline{y}$ and is
    a  proper  subgraph of  $H$.   In  that  case,  $K_1$ is  an  even
    component of $G-S_1$.  By Corollary~\ref{cor:barrier}, $G-S_1$ has
    no odd components hence $S_1$ is  a 2-separation of $G$. This is a
    contradiction to the hypothesis that $G$ is free of 2-separations.

    Assume, to the contrary, that  $H$ has a 2-separation, $S_H$, that
    contains  the  vertex  $\overline{y}$,  associated  with  the  cut
    $D_H:=\partial(Y_H)$, where $Y_H \subseteq (\overline{X} \cap Y) +
    \overline{y}$. Let $v$ denote the vertex of $S_H-\overline{y}$ and
    let $K$ be a component  of $H[Y_H-S_H]$. Necessarily, $K$ is even.
    The  set  $V(K)$ is  a  subset  of  $\overline{X} \cap  Y$,  hence
    $\partial_H(V(K))=\partial_G(V(K))$.   Let  $e$   be  an  edge  of
    $\partial(V(K))$  that is  not incident  with $v$.   Then, $e$  is
    incident with  $\overline{y}$ in $H$,  hence, in $G$, $e$  joins a
    vertex   of  $\overline{X}   \cap  Y$   to  a   vertex,  $w$,   of
    $\overline{Y}$.  Necessarily $w=u$, hence $K$ is an even component
    of  $G-\{u,v\}$.  By  Corollary~\ref{cor:barrier}, $\{u,v\}$  is a
    2-separation of $G$, again a  contradiction to the hypothesis that
    $G$ is free of 2-separations.

    We deduce that $H$ has  a nontrivial $I$-sheltered barrier, $B_H$.
    If $B_H$  is a subset  of $X \cap Y$  or of $\overline{X}  \cap Y$
    then  $B_H$ is  $C$-sheltered, and  the assertion  of the  Theorem
    holds. We may thus assume that $B_H$ is a subset of $(\overline{X}
    \cap    Y)   +    \overline{y}$   that    contains   the    vertex
    $\overline{y}$. In that case, the  set $B_G:= (B_H - \overline{y})
    \cup B$  is a barrier  of $G$. One  of the components  of $H-B_H$,
    say,  $K$,   contains  all  the   vertices  of  $X  \cap   Y$,  by
    Proposition~\ref{pro:sheltered}. As $\overline{y} \in B_H$, $K$ is
    also a component  of $G-B_G$ and $V(K)  \cap X = X  \cap Y$, hence
    $K$ contains an odd number of vertices in $X$. Moreover, as $H$ is
    2-connected, $K$ has  vertices adjacent to at least  one vertex of
    $B_H   -  \overline{y}$,   which  is   a  vertex   of  $B_G   \cap
    \overline{X}$. Finally, $B_G \cap  \overline{X}$ is nontrivial. By
    Lemma~\ref{lem:barrier-sheltered}\ref{itm:barrier-sheltered}, $B_G
    \cap \overline{X}$ is a nontrivial $C$-sheltered barrier of $G$.

    \medskip
    The analysis of Case~\ref{cas:no-2-sep}  is complete.  We may thus
    assume that

    \begin{displaymath}
      \mbox{\underline{$G$ has 2-separations}.}
    \end{displaymath}

    If $G$ has a $C$-sheltered  2-separation then the assertion holds,
    by  Proposition~\ref{pro:sheltered}.  We  may  thus assume  that
    \begin{displaymath}
      \mbox{\underline{each 2-separation  of $G$ contains a  vertex in
	  each shore of $C$}.}
    \end{displaymath}

    \smallskip\noindent Let $S$  be a 2-separation of $G$  and let $K$
    be a component of $G-S$.  As $K$ is even, it follows that $|X \cap
    V(K)| \equiv |\overline{X}  \cap V(K)| \pmod{2}$. We  say that $K$
    is   \emph{balanced}   if   $|X    \cap   V(K)|$   is   even   and
    \emph{unbalanced}, otherwise.

    \begin{pro}
      \label{pro:odd=2}
      Let  $S$  be a  2-separation  of  $G$  such  that $G-S$  has  an
      unbalanced component.  Then, $G-S$ has precisely two components,
      both of which  are unbalanced.  Moreover, every edge  of $C$ has
      both     ends    in     a     component     of    $G-S$     (See
      Figure~\ref{fig:unbalanced}).
      \begin{figure}[!ht]
	\psfrag{C}{$C$}
	\psfrag{X}{$X$}
	\psfrag{Y}{$Y$}
	\psfrag{Xl}{$X \cap Y$}
	\psfrag{Xb}{$(\overline{X} \cap Y)-u_1$}
	\psfrag{L1}{$L_1$}
	\psfrag{L2}{$L_2$}
	\psfrag{D}{$D$}
	\psfrag{I}{$I$}
	\psfrag{u1}{$u_1$}
	\psfrag{u2}{$u_2$}
	\psfrag{yb}{$\overline{y}$}
	\psfrag{G}{{\small(a) $G$}}
	\psfrag{H}{{\small(b) $H$}}
	\includegraphics [width=\textwidth] {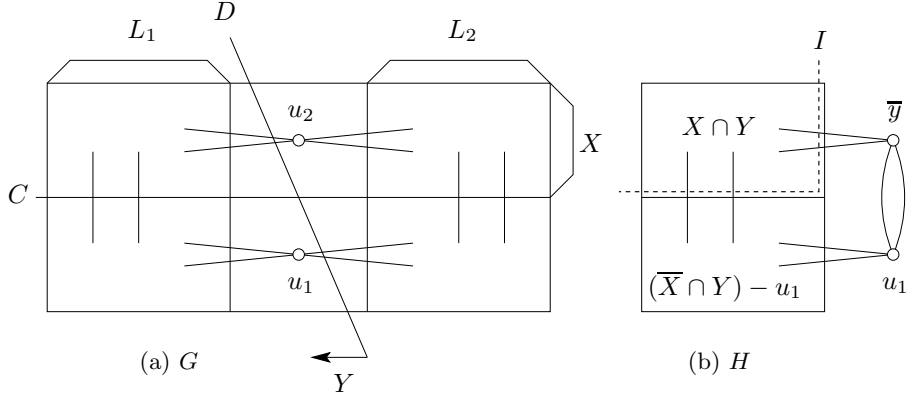}
	\caption{The    components    $G[L_1]$   and    $G[L_2]$    of
	  $G-\{u_1,u_2\}$ are unbalanced}
	\label{fig:unbalanced}
      \end{figure}
    \end{pro}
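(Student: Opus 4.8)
The plan is to work inside a fixed unbalanced component $G[L]$ of $G-S$, writing $S=\{u_1,u_2\}$, and to exploit the tightness of $C$ via a well-chosen perfect matching. First I would observe that, since $G[L]$ is unbalanced, $|X\cap L|$ and $|\overline X\cap L|$ are both odd; in particular $L$ meets both shores of $C$, so $\partial(V(G[L]))$ is not laminar with $C$ unless $V(G[L])$ is a superset of a shore---which it cannot be, since $S$ has a vertex in each shore of $C$ by the standing assumption. The key quantitative fact is a parity/counting argument on perfect matchings: take a perfect matching $M$ of $G$ and consider the edges of $M$ meeting $S$. Since each component of $G-S$ is even, $M$ matches each even component internally except for the (at most two) components that "absorb" $u_1$ and $u_2$; an unbalanced component $G[L]$, having an odd intersection with $X$, must be matched to a vertex of $S$, because otherwise $M$ restricted to $L$ would be a perfect matching of $G[L]$ but $M\cap C$ restricted to $L$ would have odd size while $|M\cap C|=1$, contradicting tightness once a second unbalanced component is present.

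Concretely, the main step is to show there cannot be three or more components, nor exactly two with one balanced. Suppose $G-S$ had an unbalanced component $G[L_1]$ and some other component $G[L_2]$ with $L_2$ not absorbing both of $u_1,u_2$. Build a perfect matching $M$ as follows: match $G[L_1]$ using a near-perfect matching leaving one vertex to be matched (through $S$) to $u_1$, say, and match the remaining components (each even, or even after removing the relevant vertex of $S$) internally, routing $u_2$ into whichever component can absorb it. By Proposition~\ref{pro:mc=>2-connected}-type connectivity and the fact that $G$ is matching covered one can arrange such an $M$. Now count $|M\cap C|$: the internal matching of $G[L_1]$ minus one vertex contributes a number of $C$-edges congruent to $|X\cap L_1|$ or to $|X\cap L_1|\pm 1$ modulo $2$ depending on which shore the exposed vertex lies in, and because $|X\cap L_1|$ is odd this forces at least one $C$-edge strictly inside $L_1$; meanwhile the edge of $M$ through $u_1$ may or may not be in $C$, and the other components contribute further $C$-edges according to their balance. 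Careful bookkeeping shows $|M\cap C|\ge 2$ as soon as a second component fails to be a matched-up unbalanced partner of the first, contradicting tightness of $C$.

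Given that $G-S$ has exactly two components $G[L_1],G[L_2]$, both unbalanced, the final clause---that every edge of $C$ has both ends in one component---follows quickly: an edge $e\in C$ with ends in different components would have both ends in $\{u_1,u_2\}$ only if it joins $u_1$ to $u_2$, but $\{u_1,u_2\}$ is independent-free is not assumed, so instead argue that any $e\in C$ joining $L_1$ to $L_2$ would have to pass through $S$, hence have an end in $S$; but then choosing $M$ through $e$ and matching both $L_i$ internally-minus-a-vertex produces $|M\cap C|\ge 2$ again, since each unbalanced $G[L_i]$ already forces an internal $C$-edge whenever its exposed vertex is chosen on the appropriate side. The main obstacle I anticipate is the careful construction of the witnessing perfect matchings---one must use that $G$ is matching covered together with Corollary~\ref{cor:barrier} (so that $G-S$ has no odd components and $S$ is not "too small") to guarantee that the near-perfect matchings on the $L_i$ with a prescribed exposed vertex actually exist and extend through $S$; phrasing this cleanly, perhaps via an auxiliary contraction and Proposition~\ref{pro:tight-contraction}, will be where the real work lies.
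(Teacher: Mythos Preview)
Your plan has the right aroma---counting $C$-edges in a perfect matching is exactly what drives the argument---but the execution contains a genuine gap, and you are working much harder than necessary in two places.

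The gap is the one you flag yourself: you propose to build the witnessing matching \emph{bottom-up}, by choosing near-perfect matchings on the individual components $G[L_i]$ with prescribed exposed vertices, and then gluing through $S$. Nothing in the hypotheses tells you such near-perfect matchings exist; $G[L_i]$ need not be factor-critical, nor even have a perfect matching. Your suggested rescue via contractions and Proposition~\ref{pro:tight-contraction} does not obviously produce matchings of the pieces with a \emph{specified} exposed vertex either. The paper sidesteps this entirely by going \emph{top-down}: since $G$ is matching covered, pick any edge $e\in\partial(L_3)$ and a perfect matching $M$ of $G$ containing $e$. Because $L_3$ is even, $|M\cap\partial(L_3)|$ is even and hence at least two, so \emph{both} vertices of $S$ are matched into $L_3$. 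Then for $i=1,2$ the set $L_i$ is matched entirely internally by $M$, and since $|X\cap L_i|$ is odd, $M$ meets $\partial(X\cap L_i)$; that edge cannot touch $S$, so it lies in $C$. Two $C$-edges, contradiction. No construction on the components is needed.

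Two further simplifications you are missing. First, the existence of a \emph{second} unbalanced component is a one-line parity count: $X$ is odd, $S$ contributes exactly one vertex to $X$, and $|X\cap L_1|$ is odd, so $|X-L_1-S|$ is odd and some other component is unbalanced. Second, the ``moreover'' clause does not need another matching argument. Once you know the two components $G[L_1],G[L_2]$ are both unbalanced, each of the two $2$-separation cuts $\partial(L_1+u_1)$ and $\partial(L_2+u_1)$ is a tight cut crossing $C$ with odd intersection on the appropriate side, and Proposition~\ref{pro:I-U} applied to each forbids exactly the edges that would let a $C$-edge touch $S$; what remains forces every edge of $C$ to have both ends in a single $L_i$.
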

    \begin{proof}
      Suppose that  $G-S$ has an unbalanced  component, $K_1:=G[L_1]$.
      The shore $X$ of $C$ is odd and it contains precisely one vertex
      of $S$.  Moreover, $|L_1 \cap X|$  is also odd.  Thus, $|X - L_1
      - S|$ is odd. It follows that $G-S$ has an unbalanced component,
      $K_2:=G[L_2]$, distinct from $K_1$.

      Assume,  to  the  contrary,  that $G-S$  contains  a  component,
      $K_3:=G[L_3]$,  not necessarily  unbalanced,  but distinct  from
      both $K_1$  and $K_2$. Let  $e$ be  an edge of  $\partial(L_3)$
      and  let $M$ be  a perfect  matching      of $G$  that contains
      edge $e$. Necessarily, $e$ is incident  with a vertex of $S$. As
      $L_3$  is even,  $M$ contains  also an  edge in  $\partial(L_3)$
      which is incident with the other vertex of $S$.  For $i=1,2$, as
      $|X \cap L_i|$  is odd, the cut $\partial(X  \cap L_i)$ contains
      an edge in  $M$ and that edge  is not incident with  a vertex of
      $S$, hence it is an edge having both ends in $L_i$, therefore it
      is an edge  of $C$.  We conclude that $M$  contains more than one
      edge  in $C$,  a contradiction  to  the hypothesis  that $C$  is
      tight.  Indeed, $K_1$  and $K_2$ are the only  two components of
      $G-S$ and they are both unbalanced.

      Let $u_1$ and $u_2$ be two  vertices of $S$.  Let $Y:=L_1 + u_1$
      be the shore of a cut associated with $S$ such that $|X \cap Y|$
      is odd.  We  have assumed that $S$ has one  vertex in each shore
      of  $C$, hence  $u_1 \in  \overline{X}$  and $u_2  \in X$.   Let
      $D:=\partial(Y)$ (See Figure~\ref{fig:unbalanced}). The cuts $C$
      and $D$ are tight  and cross.  No edge of $G$  joins a vertex in
      $(\overline{X} \cap L_1)  + u_1$ to a vertex in  $(X \cap L_2) +
      u_2$ (Proposition~\ref{pro:I-U}).   By symmetry, no edge  of $G$
      joins  a  vertex  in  $(X  \cap  L_1) +  u_2$  to  a  vertex  in
      $(\overline{X} \cap L_2) + u_1$.
    \end{proof}

    Let $S$  be a 2-separation  of $G$.  A  component $K$ of  $G-S$ is
    \emph{good} if either $K$ is balanced  or    each vertex of $S$ is
    adjacent to two or more vertices  of $K$. Let $\mathcal{S}$ be the
    collection of 2-separations of $G$.  For each $S \in \mathcal{S}$,
    let $\mathcal{F}(S)$ be  the set of good components  of $G-S$. Let
    $$\mathcal{F} := \bigcup_{S \in \mathcal{S}} \mathcal{F}(S).$$

    \begin{cas}
      \label{cas:no-good}
      The collection $\mathcal{F}$ is empty.
    \end{cas}
    Let  $S$ be  a 2-separation  of $G$.  The hypothesis  of the  case
    implies that the components     of  $G-S$   are    unbalanced. By
    Proposition~\ref{pro:odd=2}, $G-S$  consists of precisely two
    components,  $K_i:=G[L_i]$, $i=1,2$.  We  have  assumed that  each
    shore of $C$ contains a vertex of $S$.  Let $u_1$ be the vertex of
    $S$ in  $\overline{X}$ and let $u_2$  be the vertex of  $S$ in $X$
    (Figure~\ref{fig:unbalanced}).

    The hypothesis of the case also implies that for $i = 1,2$, one of
    $u_1$ and $u_2$  is adjacent only to one vertex  of $L_i$.  Adjust
    notation so  that $u_1$ is adjacent  to only one vertex  of $L_1$,
    says $v_1$.

    \begin{pro}
      The vertex $u_1$ is adjacent only to one vertex of $L_2$.
    \end{pro}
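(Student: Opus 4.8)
The plan is to argue by contradiction: suppose $u_1$ has two or more neighbours in $L_2$. Since $K_2=G[L_2]$ is unbalanced and, by the hypothesis of Case~\ref{cas:no-good}, is not a good component, at least one of $u_1,u_2$ has exactly one neighbour in $L_2$, and by the supposition that vertex must be $u_2$; let $v_2$ denote its unique neighbour in $L_2$. The aim is to exhibit a $2$-separation of $G$ that has a good component, contradicting $\mathcal{F}=\emptyset$; the candidate will be the pair $\{v_1,v_2\}$.

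The first step is to record the structure forced here by Proposition~\ref{pro:odd=2}. Every edge of $C$ lies inside $L_1$ or inside $L_2$, so, as neither $u_1$ nor $u_2$ belongs to a component of $G-S$, neither of them is incident with an edge of $C$; since $u_1\in\overline X$ and $u_2\in X$, this says that $u_1$ has no neighbour in $X$ and $u_2$ has no neighbour in $\overline X$. Consequently $v_1\in\overline X\cap L_1$ and $v_2\in X\cap L_2$, every neighbour of $u_1$ in $L_2$ lies in $\overline X\cap L_2$, and there is no edge of $G$ between $L_1$ and $L_2$. I would also use that $G$ is $2$-connected (Proposition~\ref{pro:mc=>2-connected}), so each of $u_1,u_2$ has a neighbour in each of $L_1,L_2$, and that $|L_1|$ and $|L_2|$ are even (being orders of components of the $2$-separation $S$, hence each is at least $2$).

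The step I expect to be the main obstacle is to show that $G[L_1-v_1]$ and $G[L_2-v_2]$ are connected. For the former I would examine $G-\{v_1,u_2\}$: since there is no edge between $L_1$ and $L_2$ and $v_1$ is the only neighbour of $u_1$ in $L_1$, the nonempty sets $L_1-v_1$ and $L_2+u_1$ lie in different components of $G-\{v_1,u_2\}$; each of them has odd order, hence contains an odd component, so $o(G-\{v_1,u_2\})\ge 2$. By Corollary~\ref{cor:barrier}, $o(G-\{v_1,u_2\})\le 2$, so equality holds and $G-\{v_1,u_2\}$ has no even component, which forces $L_1-v_1$ to be a single (odd) component. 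The symmetric argument, using the pair $\{v_2,u_1\}$, shows that $G[L_2-v_2]$ is connected.

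Finally I would verify that $\{v_1,v_2\}$ does the job. In $G-\{v_1,v_2\}$ the sets $Q:=(L_1-v_1)+u_2$ and $P:=(L_2-v_2)+u_1$ partition the vertices, and there is no edge between them, because there is no edge between $L_1$ and $L_2$, the vertices $u_1$ and $u_2$ are nonadjacent, and $v_1$ (respectively $v_2$) is the only neighbour of $u_1$ in $L_1$ (respectively of $u_2$ in $L_2$). Moreover $G[Q]$ and $G[P]$ are connected, by the previous step together with the facts that $u_2$ has a neighbour in $L_1-v_1$ and $u_1$ has a neighbour in $L_2-v_2$. Hence $\{v_1,v_2\}$ is a $2$-separation of $G$ whose two components $G[Q]$ and $G[P]$ are both even. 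Since $|X\cap P|=|X\cap L_2|-1$ is even ($K_2$ being unbalanced), $G[P]$ is a balanced, and therefore good, component, contradicting $\mathcal{F}=\emptyset$. Therefore $u_1$ is adjacent to exactly one vertex of $L_2$.
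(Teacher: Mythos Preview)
Your proof is correct and shares the paper's overall plan: assume $u_1$ has at least two neighbours in $L_2$, infer that $u_2$ has a unique neighbour $v_2\in L_2$, show that $T:=\{v_1,v_2\}$ is a $2$-separation of $G$ admitting a balanced component, and contradict $\mathcal{F}=\emptyset$. The execution differs. You first extract from Proposition~\ref{pro:odd=2} that $u_1u_2\notin E$, that $v_1\in\overline X$ and $v_2\in X$, and then spend the bulk of the argument proving that $G[L_1-v_1]$ and $G[L_2-v_2]$ are connected, via two auxiliary barrier counts on $\{v_1,u_2\}$ and $\{v_2,u_1\}$; this pins down the components of $G-T$ exactly as $G[Q]$ and $G[P]$, after which balancedness of $G[P]$ is immediate. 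The paper never determines the components of $G-T$: it only observes that $Z_1:=(L_1-v_1)+u_2$ and $Z_2:=(L_2-v_2)+u_1$ are separated, then argues by parity and Corollary~\ref{cor:barrier} that no component can be odd, and finally handles balancedness by a two-case split (if $G-T$ has exactly two components they must be $G[Z_1]$ and $G[Z_2]$, both balanced; if more than two, Proposition~\ref{pro:odd=2} applied to $T$ forces every component to be balanced). Your route yields more structure at the cost of the extra connectivity arguments; the paper's is shorter and avoids locating $v_1,v_2$ in the shores of $C$.
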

    \begin{proof}
      Suppose, to  the contrary, that  $u_1$ is adjacent to  more than
      one vertex  in $L_2$. By  the hypothesis  of the case,  $u_2$ is
      adjacent  only  to  one  vertex   in  $L_2$,  say,  $v_2$.   Let
      $T:=\{v_1,v_2\}$.   No   edge  of   $G-T$  joins  a   vertex  of
      $Z_1:=(L_1-v_1)+u_2$ to a vertex of $Z_2:=(L_2-v_2)+u_1$.  Thus,
      $G-T$ has two or more components.

      Let us  now prove that each  component of $G-T$ is  even. If $G-T$
      has an  odd component  then, by  parity, it has  at least  two odd
      components.  In that  case, by  Corollary~\ref{cor:barrier}, $G-T$
      has precisely two components, both odd.  But if $G-T$ has only two
      components  then they  are $G[Z_1]$  and $G[Z_2]$,  both even.  We
      conclude that each component of $G-T$ is even.

      Let us now  prove that each component of $G-T$  has an even number
      of vertices in each shore of $C$. If $G-T$ has only two components
      then they are $G[Z_1]$ and $G[Z_2]$,  and both have an even number
      of vertices  in each  shore of $C$.  Alternatively, if  $G-T$ has
      more than two  components then again each of  these components has
      an   even   number   of   vertices   in   each   shore   of   $C$,
      by Proposition~\ref{pro:odd=2}.

      As  each  component  of  $G-T$  is  even,  the  pair  $T$  is  a
      2-separation of $G$. Each component  of $G-T$ has an even number
      of vertices in each shore of $C$. We conclude that $\mathcal{F}$
      is nonempty, in contradiction to the hypothesis of the case.
    \end{proof}

    In sum,  for $i=1,2$,  the vertex  $u_1$ is  adjacent to  only one
    vertex  of   $L_i$,  say,  $v_i$.  Clearly,   $\{v_1,v_2\}$  is  a
    $C$-sheltered nontrivial barrier of $G$.

    \begin{cas}
      \label{cas:good}
      The collection $\mathcal{F}$ is nonempty.
    \end{cas}
    Let   $K_1$  be   a  minimal   component  in   $\mathcal{F}$,  let
    $L_1:=V(K_1)$ and let  $S$ be the associated  2-separation of $G$.
    Let $u_1$ and $u_2$ be two vertices of $S$.  Let $Y:=L_1 + u_1$ be
    the shore of a  cut associated with $S$ such that  $|X \cap Y|$ is
    odd and  let $D:=\partial(Y)$.  We  have assumed that $S$  has one
    vertex in each shore of $C$.  If $K_1$ is unbalanced then $u_1 \in
    \overline{X}$       and        $u_2       \in        X$       (See
    Figure~\ref{fig:unbalanced}). Alternatively, if  $K_1$ is balanced
    then   $u_1    \in   X$   and   $u_2    \in   \overline{X}$   (See
    Figure~\ref{fig:balanced}).                                    Let
    $H:=\Contra{G}{\overline{Y}}{\overline{y}}$ and let $I:=\partial(X
    \cap    Y)$.     The    cuts    $C$    and     $D$    cross.    By
    Proposition~\ref{pro:I-U}, the cut $I$ is  tight and no edge joins
    a  vertex  of  $\overline{X}  \cap  Y$ to  a  vertex  of  $X  \cap
    \overline{Y}$.

    \begin{figure}[!ht]
      \psfrag{C}{$C$}
      \psfrag{X}{$X$}
      \psfrag{Y}{$Y$}
      \psfrag{Xl}{$(X \cap Y)-u_1$}
      \psfrag{Xb}{$\overline{X} \cap Y$}
      \psfrag{L1}{$L_1$}
      \psfrag{D}{$D$}
      \psfrag{I}{$I$}
      \psfrag{u1}{$u_1$}
      \psfrag{u2}{$u_2$}
      \psfrag{yb}{$\overline{y}$}
      \psfrag{G}{{\small(a) $G$}}
      \psfrag{H}{{\small(b) $H$}}
      \includegraphics [width=\textwidth] {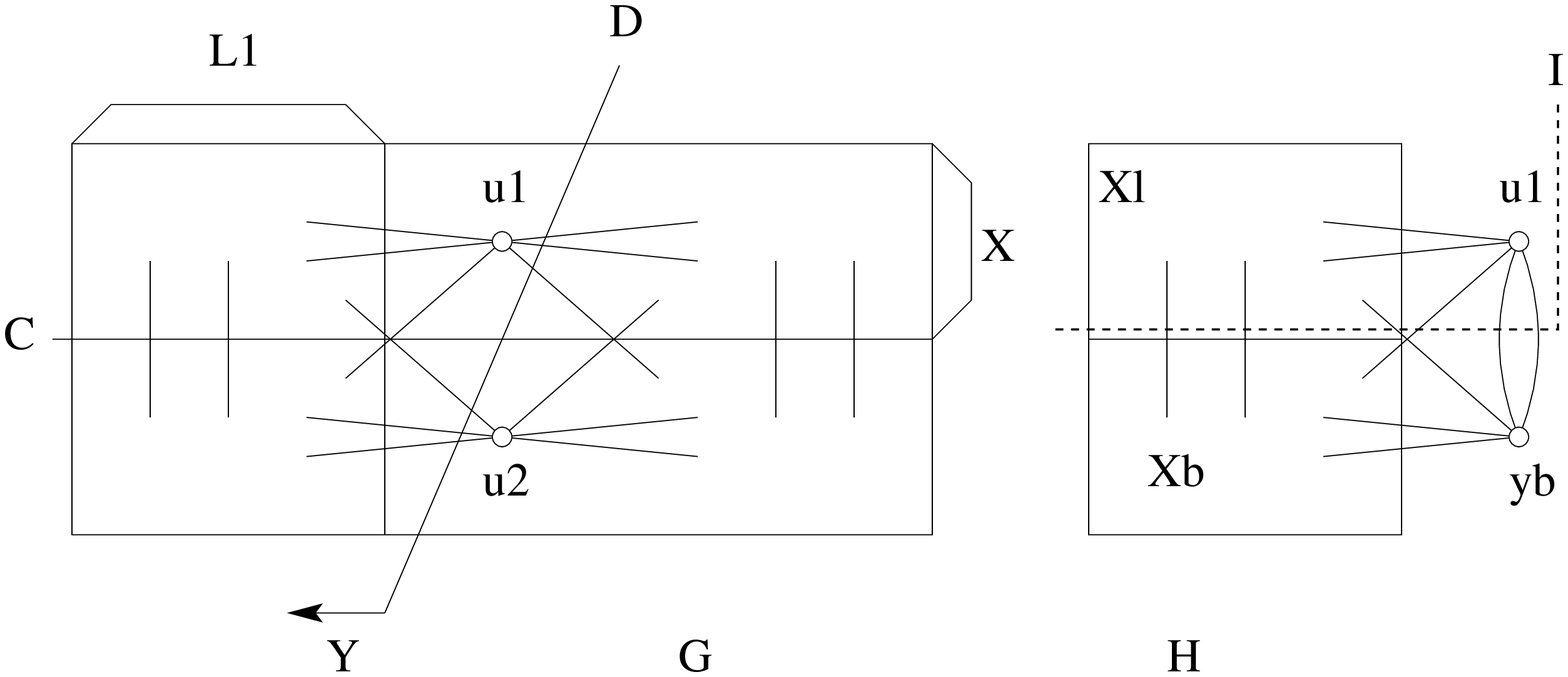}
      \caption{The components $G[L_1]$ of $G-\{u_1,u_2\}$ is balanced}
      \label{fig:balanced}
    \end{figure}

    \begin{sub}
      The cut $I$ is trivial.
    \end{sub}
    Suppose  that $I$  is trivial.  If $K_1$  is unbalanced,  then, as
    $K_1$ is good, it has two or more vertices adjacent to vertices of
    $S$, hence  $I$ is nontrivial.  We deduce that $K_1$  is balanced.
    In  that   case,  $X  \cap  Y   =  \{u_1\}$  and  every   edge  of
    $\partial(\overline{X}  \cap  Y)$ is  incident  with  a vertex  in
    $S$. It follows that the shore  $Y':= (\overline{X} \cap Y) + u_2$
    of   the   associated   cut   $\partial(Y')$  is   a   subset   of
    $\overline{X}$. The assertion of the Theorem holds in this case.
    We may thus assume that
    \begin{displaymath}
      \mbox{\underline{The cut $I$ is nontrivial}.}
    \end{displaymath}
    The  cuts $D$  and $I$  are  laminar, hence  $I$ is  tight in  $H$
    (Proposition~\ref{pro:tight-contraction}).    We  now   apply  the
    induction hypothesis,  with $H$  and $I$ playing  respectively the
    roles of $G$ and $C$.

    \begin{sub}
      \label{cas:barrier}
      The graph $H$ has a nontrivial $I$-sheltered  barrier.
    \end{sub}
    Let $S_H$  be a nontrivial  $I$-sheltered barrier of $H$.   Let us
    now   apply   Lemma~\ref{lem:inheritance}    to   $H$,   $Y$   and
    $\overline{y}$  playing respectively  the  roles of  $G$, $X$  and
    $\overline{x}$. Let  $S$ be  the set defined  in the  statement of
    that Lemma. By the item~\ref{itm:inheritance} of the Lemma, $S$ is
    a (nontrivial) barrier of $G$.   If $\overline{y} \notin S_H$ then
    $S$  is equal  to  $S_H$ and  is  a subset  of $X  \cap  Y$ or  of
    $\overline{X} \cap  Y$, hence $S$  is $C$-sheltered.  We  may thus
    assume   that    $\overline{y}   \in    S_H$,   in    which   case
    $S=(S_H-\overline{y})   +  u_2$   and   $S_H$  is   a  subset   of
    $(\overline{X} \cap Y) + \overline{y}$.  If $K_1$ is balanced then
    $u_2 \in \overline{X}$ (Figure~\ref{fig:balanced}), hence $S$ is a
    subset  of  $\overline{X}$.  We  may  thus  assume that  $K_1$  is
    unbalanced.

    In  $H$, some  component of  $H-S_H$, say,  $W$, contains  all the
    vertices of $X \cap Y$, by Proposition~\ref{pro:sheltered}. By the
    2-connectivity of $H$, $W$ contains  a vertex adjacent to a vertex
    of    $S_H   \cap    \overline{X}   \cap    Y$.    Moreover,    by
    Lemma~\ref{lem:inheritance},  $W$ is  a  component  of $G-S$.   By
    Lemma~\ref{lem:barrier-sheltered}\ref{itm:barrier-sheltered},  the
    set  $S -  u_2$, which  is equal  to $S  \cap \overline{X}$,  is a
    barrier of  $G$. If $S  \cap \overline{X}$ is nontrivial  then the
    assertion of the Theorem holds.

    We may thus assume that $S  \cap \overline{X}$ is trivial, let $v$
    be its  only vertex. Let  $L$ be  a component of  $S-S_H$ distinct
    from $W$. Then, $V(L)\subset\overline{X} \cap  Y$. The only vertex
    of $\overline{X}\cap Y$ adjacent to $\overline{y}$ is the vertex $u_1$,
    hence $L$ contains $u_1$. We deduce  that $L$ and $W$ are the only
    two components of $H-S_H$. As  $K_1$ is unbalanced and good, $u_1$
    is adjacent  to two or  more vertices of $\overline{X}  \cap L_1$,
    hence $L$ is  nontrivial.  The graph $L-u_1$ is  a proper nonempty
    subgraph   of  $G$,   hence   the  graph   $G-\{u_1,v\}$  is   not
    connected. By Corollary~\ref{cor:barrier}, $\{u_1,v\}$ is either a
    barrier or a 2-separation of $G$. As $\{u_1,v\}$ is $C$-sheltered,
    the      assertion     of      the      Theorem     holds,      by
    Proposition~\ref{pro:sheltered}.       The       analysis       of
    Case~\ref{cas:barrier} is complete.

    \begin{sub}
      \label{cas:2-sep}
      The  graph $H$  has  a  2-separation cut,  $D_H:=\partial(Z_H)$,
      associated  with a  2-separation  $S_H$, such  that  $Z_H$ is  a
      subset of a shore of $I$.
    \end{sub}
    Let  us  now apply  Lemma~\ref{lem:inheritance}  to  $H$, $Y$  and
    $\overline{y}$  playing respectively  the  roles of  $G$, $X$  and
    $\overline{x}$. Let  $S$ be  the set defined  in the  statement of
    that Lemma. By the item~\ref{itm:inheritance} of the Lemma, $S$ is
    a 2-separation of  $G$.  We have assumed that one  of the vertices
    of $S$ is in $X$, the other is in $\overline{X}$. Let $w_1$ be the
    vertex  of $S$  in $X$,  and let  $w_2$ be  the vertex  of $S$  in
    $\overline{X}$.

    Let $W_H:=Z_H-S_H$.  Then, $W_H$ is a  proper subset of a shore of
    $I$.  If $W_H$ is  a subset of a shore of $C$,  then, as $S$ meets
    both shores of  $C$, it follows that one of  the 2-separation cuts
    of $G$ associated with $S$ has a shore that is a subset of $C$. We
    may thus assume that $W_H$ meets  both shores of $C$.  As $W_H$ is
    a subset of a shore of $I$, it follows that $\overline{y} \in W_H$
    and  $W_H$ is  a  subset of  the shore  $(\overline{X}  \cap Y)  +
    \overline{y}$. Then, $S=S_H$.

    If $K_1$  is unbalanced then,  by definition of $K_1$,  the vertex
    $u_2$ is adjacent  to at least two vertices of  $X \cap Y$.  Thus,
    both  vertices  of  $S_H$  are  in   $X  \cap  Y$,  hence  $S$  is
    $C$-sheltered, a case already considered.  We may thus assume that
    $K_1$ is balanced.  The subgraph  of $G$ induced by $\overline{Y}$
    is connected  (Corollary~\ref{cor:shores-connected}).  One  of the
    components  of $G-S$  is  the  graph $G[W]$,  where  $W  = (W_H  -
    \overline{y}) \cup \overline{Y}$.   As $W_H$ is even,  then so too
    is $W$.   The vertices  $u_1$ and  $\overline{y}$ are  adjacent in
    $H$,  therefore $u_1  \in S_H  \cup W_H$,  hence $u_1  \in S  \cup
    W$. We conclude that every component of $G-S$ distinct from $G[W]$
    is   a   proper   subgraph    of   $K_1$.    Moreover,   the   set
    $T_1:=W_H-\overline{y}$ is an odd subset of $\overline{X} \cap Y$,
    and $T_2:=\overline{X} \cap \overline{Y}$ is also odd. Clearly, $W
    \cap \overline{X} = T_1 \cup  T_2$, hence $W \cap \overline{X}$ is
    even. That is, $W$ is balanced.  By Proposition~\ref{pro:odd=2},
    every component of $G-S$ is balanced. In particular, the components
    of $G-S$  distinct  from  $W$  are  proper subgraphs of  $K_1$,  in
    contradiction  to  the  minimality   of  $K_1$.

    \bigskip
    The  analysis of  Case~\ref{cas:2-sep} concludes  the analysis  of
    Case~\ref{cas:good} and completes the proof of the Main Theorem.
  \end{proof}

  \bibliography{references}

\end{document}